\begin{document}
\theoremstyle{plain}
\newtheorem{Thm}{Theorem}
\newtheorem{Cor}{Corollary}
\newtheorem{Con}{Conjecture}
\newtheorem{Main}{Main Theorem}
\newtheorem{Lem}{Lemma}
\newtheorem{Prop}{Proposition}

\theoremstyle{definition}
\newtheorem{Def}{Definition}
\newtheorem{Note}{Note}

\newtheorem{example}{\indent\sc Example}

\theoremstyle{remark}
\newtheorem{notation}{Notation}
\renewcommand{\thenotation}{}

\errorcontextlines=0
\numberwithin{equation}{section}
\renewcommand{\rm}{\normalshape}%

\title[Product of Lorentzian two manifolds]%
   {Lagrangian immersions in the product of Lorentzian two manifolds}
\author{Nikos Georgiou}
\address{Nikos Georgiou\\
          Department of Mathematics and Statistics\\
          University of S\~ao Paulo\\
          Rua do Mat\~ao,  1010 Cidade Universitaria,\\
          S\~ao Paulo,  SP - CEP 05508-090, Brazil.}
\address{Department of Mathematics,\\
          Federal University of S\~ao Carlos,\\
           Rua Washington Luis km 235,\\
          13565-905-S\~ao Carlos -Brazil.}
\email{nikos@ime.usp.br}

\keywords{Lorentzian surfaces, para-Kaehler structure, minimal Lagrangian surfaces, surfaces with parallel mean curvarture vector, Hamiltonian minimal surfaces}
\subjclass{Primary: 51M09; Secondary: 51M30}
\date{25 March 2014}
%\footnote{$^{*}$The author is supported by Fapesp (2010/08669-9}

\begin{abstract}
 For Lorentzian 2-manifolds $(\Sigma_1,g_1)$ and $(\Sigma_2,g_2)$ we consider the two product para-K\"ahler structures $(G^{\epsilon},J,\Omega^{\epsilon})$ defined on the product four manifold $\Sigma_1\times\Sigma_2$, with $\epsilon=\pm 1$. We show that the metric $G^{\epsilon}$ is locally conformally flat (resp. Einstein) if and only if the Gauss curvatures $\kappa_1,\kappa_2$ of $g_1,g_2$, respectively, are both constants satisfying $\kappa_1=-\epsilon\kappa_2$ (resp. $\kappa_1=\epsilon\kappa_2$). We give the conditions on the Gauss curvatures for which every Lagrangian surface with parallel mean curvature vector is the product $\gamma_1\times\gamma_2\subset\Sigma_1\times\Sigma_2$, where $\gamma_1$ and $\gamma_2$ are curves of constant curvature. We study Lagrangian surfaces in the product $d{\mathbb S}^2\times d{\mathbb S}^2$ with non null parallel mean curvature vector and finally, we explore the stability and Hamiltonian stability of certain minimal Lagrangian surfaces and $H$-minimal surfaces.
\end{abstract}

\maketitle

\let\thefootnote\relax\footnote{The author is supported by Fapesp (2010/08669-9)}

\section{Introduction}

This article is a continuation of our previous work \cite{Ge1} on which we have studied minimal Lagrangian surfaces in the K\"ahler structures endowed in the product $\Sigma_1\times \Sigma_2$ of Riemannian two manifolds. Here, we consider again the product structure $\Sigma_1\times \Sigma_2$, where $(\Sigma_1,g_1)$ and $(\Sigma_2,g_2)$ are Lorentzian surfaces. By Lorentz surface we mean a connected, orientable 2-manifold $\Sigma$ endowed with a metric $g$ of indefinite signature. Analogously with the Riemannian case, we may construct a para-K\"ahler structure $(g,j,\omega)$ on a Lorentzian surface $(\Sigma,g)$. We recall that a \emph{para-K\"ahler structure} is a pair $(g,j)$ defined on a manifold $M$ of even dimension and has the same properties than a K\"ahler one, except that $j$ is paracomplex structure rather than complex, i.e,  we have $j^2=Id$ and is integrable, by meaning that the Nijenhuis tensor,
\begin{equation}\label{e:nijenhuis}
N^J(X,Y):=[X,Y]+[JX,JY]-J[JX,Y]-J[X,JY],
\end{equation}
vanishes. Moreover, the compatibilty condition of the metric $g$ with the paracomplex structure $j$ becomes $g(j.,j.)=-g(.,.)$. The symplectic structure $\omega$ can be defined by $\omega(.,.)=g(j.,.)$.

 For Lorentzian surfaces $(\Sigma_1,g_1)$ and $(\Sigma_2,g_2)$, we denote by $\kappa_1$ and $\kappa_2$ the Gauss curvatures of $g_1$ and $g_2$, respectively. For $\epsilon\in\{-1,1\}$, we may define an almost para-K\"ahler structure $(G^{\epsilon},\Omega^\epsilon,J)$ on the product $\Sigma_1\times\Sigma_2$ where $G^{\epsilon}$ is the para-K\"ahler metric, the endomorphism $J$ is an almost paracomplex structure and $\Omega^\epsilon$ is the symplectic 2-form. This structure will be described in Section \ref{s:construction} and in particular we prove:

\begin{Thm}\label{t:einsteinandconformlly} If $(\Sigma_1,g_1)$ and $(\Sigma_2,g_2)$ are Lorentzian two manifolds, the quadruples $(\Sigma_1\times\Sigma_2, G^{\epsilon},J,\Omega^{\epsilon})$ are 4-dimensional para-K\"ahler structures. Furthermore, the  para-K\"ahler metric $G^{\epsilon}$ is conformally flat $($resp.  Einstein$)$ if and only if the Gauss curvatures $\kappa_1$ and $\kappa_2$ are constants with $\kappa_1=-\epsilon \kappa_2$ $($resp. $\kappa_1=\epsilon \kappa_2)$. 
\end{Thm}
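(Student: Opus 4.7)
The plan is first to verify that $(G^\epsilon, J, \Omega^\epsilon)$ is para-Kähler by checking each axiom pointwise: $J^2=\mathrm{Id}$, $G^\epsilon(J\cdot,J\cdot) = -G^\epsilon(\cdot,\cdot)$, $d\Omega^\epsilon=0$ for $\Omega^\epsilon(\cdot,\cdot) = G^\epsilon(J\cdot,\cdot)$, and $N^J=0$ from \eqref{e:nijenhuis}. Using the description of $G^\epsilon,J,\Omega^\epsilon$ to be given in Section \ref{s:construction}, the first three conditions reduce to algebraic identities involving the two induced para-complex structures $j_1,j_2$ and the closedness of the area forms of $g_1,g_2$. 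The integrability of $J$ follows once one observes that the $\pm 1$-eigendistributions of $J$ are the tangent spaces of the two factors, which are manifestly involutive.

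For the curvature statements I would pass to null coordinates $(u_i,v_i)$ on each $(\Sigma_i,g_i)$ so that $g_i=2F_i\,du_i\,dv_i$ and $\kappa_i = -F_i^{-1}\partial_{u_i}\partial_{v_i}\log F_i$. In the product chart $(u_1,v_1,u_2,v_2)$ the metric $G^\epsilon$ becomes quasi-diagonal, and the Christoffel symbols, Riemann curvature, and Ricci tensor can be written explicitly in terms of $F_1,F_2$ and their first two derivatives. Because $J$ is $G^\epsilon$-parallel, $R(X,Y)J = J R(X,Y)$, which kills many components of the curvature tensor and reduces both conditions to a short list of scalar equations.

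Imposing $\mathrm{Ric}^{G^\epsilon} = \lambda G^\epsilon$ then yields identities that force $d\kappa_1 = d\kappa_2 = 0$ together with $\kappa_1 = \epsilon\kappa_2$; the converse is a direct substitution giving an Einstein metric with scalar curvature proportional to $\kappa_1$. For local conformal flatness in dimension four, one needs the Weyl tensor of $G^\epsilon$ to vanish, which is most efficiently handled via the Kulkarni--Nomizu decomposition $R = W + \text{(Schouten part)}$. After exploiting the para-Kähler symmetries of $R$, the non-trivial Weyl components reduce to equations that are equivalent to $\kappa_1 = -\epsilon\kappa_2 = \text{const}$.

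The hard part is the bookkeeping of curvature components in a four-dimensional neutral-signature setting, in particular writing the Weyl tensor cleanly. The decisive simplification comes from the $J$-invariance of $R$ and from the splitting $T(\Sigma_1\times\Sigma_2) = T\Sigma_1\oplus T\Sigma_2$ given by the eigendistributions of $J$: all mixed-index components vanish identically, so the task collapses to verifying a handful of scalar conditions on $F_1,F_2$ that then translate into the stated conditions on $\kappa_1,\kappa_2$.
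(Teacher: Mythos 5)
Your route is sound and genuinely different from the paper's on the curvature statements. The paper verifies the para-K\"ahler axioms directly (vanishing of the Nijenhuis tensor of $J=j_1\oplus j_2$ and compatibility of $J$ with $\Omega^{\epsilon}$), then works with an explicit pseudo-orthonormal frame $(E_1,\dots,E_4)$: it computes ${\mbox{Ric}}^{\epsilon}(E_1,E_1)=\kappa_1$, ${\mbox{Ric}}^{\epsilon}(E_3,E_3)=\kappa_2$, deduces ${\mbox{R}}^{\epsilon}=2(\kappa_1+\epsilon\kappa_2)$, and for the forward conformally-flat implication invokes the external fact (cited from \cite{AR}) that a conformally flat para-K\"ahler metric is scalar flat, after which $\kappa_1(x)=-\epsilon\kappa_2(y)$ forces both curvatures constant since they depend on separate variables; the converse is delegated to the $W^{\pm}$ computation of \cite{Ge1}. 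Your plan---null coordinates $g_i=2F_i\,du_i\,dv_i$, explicit Christoffels, and a direct Weyl computation via the Schouten/Kulkarni--Nomizu decomposition---is more self-contained: it needs neither citation nor the self-dual/anti-self-dual splitting, at the price of heavier bookkeeping. It does deliver the theorem, because for a product of two surfaces the pointwise condition $W=0$ turns out to be equivalent to vanishing scalar curvature, i.e.\ $\kappa_1(x)+\epsilon\kappa_2(y)=0$; just make the separation-of-variables step (pointwise identity $\Rightarrow$ both curvatures constant) explicit, since $W=0$ by itself is only a pointwise algebraic condition. Your Einstein argument matches the paper's in substance: ${\mbox{Ric}}$ of the product is $\kappa_1 g_1\oplus\kappa_2 g_2$, so ${\mbox{Ric}}=\lambda G^{\epsilon}$ forces $\kappa_1=\lambda$ and $\kappa_2=\epsilon\lambda$, i.e.\ $\kappa_1=\epsilon\kappa_2$ constant.

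Two corrections are needed. First, the $\pm1$-eigendistributions of $J=j_1\oplus j_2$ are \emph{not} the tangent spaces of the two factors; those are the eigenspaces of the product endomorphism $(X_1,X_2)\mapsto(X_1,-X_2)$, which is a different paracomplex structure. The eigendistributions of $J$ are $L_1^{\pm}\oplus L_2^{\pm}$, where $L_i^{\pm}$ are the null eigen-line fields of $j_i$ on $\Sigma_i$; in your own null coordinates they are spanned by $\{\partial_{u_1},\partial_{u_2}\}$ and $\{\partial_{v_1},\partial_{v_2}\}$, hence are coordinate distributions and involutivity (so $N^J=0$) still follows at once---but your stated justification is wrong as written. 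Second, the claim that ``all mixed-index components vanish'' is true for the Riemann tensor of the product metric but false for the Weyl tensor: the Kulkarni--Nomizu products of the metric with itself and with the Schouten tensor contribute nonzero mixed components, and in an adapted frame a component of the type $W(e_1,v_1,e_1,v_1)$ is proportional to the scalar curvature. These mixed components are precisely what force $\kappa_1+\epsilon\kappa_2=0$; if you discard them the forward implication evaporates. Keep them---they carry the entire content of the conformal-flatness computation.
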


In Section \ref{s:rankoneranotwo} we study the surface theory of the para-K\"ahler structures constructed in Section \ref{s:construction}. An analogous result with Theorem 3 of \cite{Ge1} is the following theorem:

\begin{Thm}\label{t:notflattt} Let $(\Sigma_1,g_1)$ and $(\Sigma_2,g_2)$ be Lorentzian two manifolds and let $(G^{\epsilon},J,\Omega^{\epsilon})$ be the para-K\"ahler product structures on $\Sigma_1\times\Sigma_2$ constructed in Section \ref{s:construction}. Assume that one of the following holds:

\emph{(i)} The metrics $g_1$ and $g_2$ are both non-flat almost everywhere and away from flat points we have $\epsilon \kappa_1\kappa_2<0$.

\emph{(ii)} Only one of the metrics $g_1$ and $g_2$ is flat while the other is non-flat almost everywhere. 

\noindent  Then every $\Omega^{\epsilon}$-Lagrangian surface with parallel mean curvature vector is locally the product $\gamma_1\times\gamma_2$, where each curve $\gamma_i\subset\Sigma_i$ has constant curvature.
\end{Thm}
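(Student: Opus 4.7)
Let $L\subset\Sigma_1\times\Sigma_2$ be an $\Omega^{\epsilon}$-Lagrangian surface with parallel mean curvature vector $H$. At each point of $L$ consider the differentials of the projections $\pi_i:L\to\Sigma_i$; the relevant integer invariants are the ranks $r_1,r_2\in\{0,1,2\}$. Because products of curves $\gamma_1\times\gamma_2$ are automatically Lagrangian with respect to $\Omega^{\epsilon}$, the geometric content of the theorem is that, under hypothesis (i) or (ii), no open subset of $L$ can have $\pi_1$ or $\pi_2$ of maximal rank $2$. My plan is therefore to (a) translate the parallel-mean-curvature condition into PDEs via the para-K\"ahler formalism, (b) combine these with the Codazzi--Ricci equations and the curvature hypotheses to force $(r_1,r_2)\le(1,1)$ on a dense open subset, and (c) check that when $L$ does split as $\gamma_1\times\gamma_2$, the assumption $\nabla^{\perp}H=0$ forces each curve to have constant geodesic curvature.

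\textbf{Para-K\"ahler formalism.} Since $J$ interchanges the tangent and normal bundles of a Lagrangian surface, the trilinear form $C(X,Y,Z):=G^{\epsilon}(h(X,Y),JZ)$ is totally symmetric, and the mean curvature $H$ is encoded by the 1-form $\sigma_H:=\Omega^{\epsilon}(H,\cdot)|_L$ dual to $\mathrm{tr}\,C$. Standard para-K\"ahler identities give $d\sigma_H=\rho|_L$ where $\rho$ is the Ricci form of $(G^{\epsilon},J)$; under parallel mean curvature, $\sigma_H$ is parallel and in particular closed, so $\rho|_L=0$. Using the formula for $\rho$ computed in the construction section, this vanishing already couples $\kappa_1\circ\pi_1$ and $\kappa_2\circ\pi_2$ along $L$. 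I would write a local orthonormal/null frame $\{e_1,e_2\}$ of $TL$ adapted to the para-complex decomposition $T(\Sigma_1\times\Sigma_2)=T\Sigma_1\oplus T\Sigma_2$, express $d\pi_1,d\pi_2$ in this frame, and record how $r_1,r_2$ manifest in $C$ and in the Lagrangian angle-type function.

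\textbf{Killing the rank-$2$ case — the main obstacle.} The crux is to derive a contradiction from the assumption that, on some open set, $\pi_1$ (say) has rank $2$; this is the step whose analogue in \cite{Ge1} occupies most of Theorem~3 there. Using $\nabla^{\perp}H=0$ in the Codazzi equation $(\nabla_X h)(Y,Z)-(\nabla_Y h)(X,Z)=R^{\perp}(X,Y)Z$, where the right-hand side is the tangential part of the ambient curvature acting normally, the symmetric-cubic-form structure forces an overdetermined system on the coefficients of $C$. The ambient curvature tensor of $(G^{\epsilon},J,\Omega^{\epsilon})$ is a weighted combination of $\kappa_1$ and $\kappa_2$ on the two factors; feeding this in under (i) the sign condition $\epsilon\kappa_1\kappa_2<0$ makes the discriminant of the resulting quadratic nonvanishing, while under (ii) one of the summands drops out. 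Either way the system degenerates, and the off-diagonal components of $C$ (which measure the failure of $r_i\le 1$) must vanish. I expect this algebraic/analytic extraction of rank reduction from the Codazzi system to be the hardest step.

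\textbf{Completing the proof.} Once $r_1,r_2\le 1$ on a dense open set, integrability of the distributions $\ker d\pi_1$ and $\ker d\pi_2$ on $L$ (which are $\Omega^{\epsilon}$-Lagrangian complements) produces local coordinates $(u,v)$ in which $L=\gamma_1(u)\times\gamma_2(v)$; this uses only that the Lagrangian condition splits compatibly on the product. For such a product, the second fundamental form diagonalizes as $h(\partial_u,\partial_u)=\kappa^g_1\,n_1$ and $h(\partial_v,\partial_v)=\kappa^g_2\,n_2$ with $\kappa^g_i$ the geodesic curvature of $\gamma_i$ in $\Sigma_i$; thus $H$ is a linear combination of the normal fields $n_i$ with coefficients essentially $\kappa^g_i$. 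Writing $\nabla^{\perp}H=0$ out along $\partial_u$ and $\partial_v$ and using that $n_1,n_2$ are parallel in the ambient normal direction (they lie in different factors), one sees the two conditions $\partial_u\kappa^g_1=0$ and $\partial_v\kappa^g_2=0$, which, since $\kappa^g_i$ depends only on the parameter of $\gamma_i$, yield constants.
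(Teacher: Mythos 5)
You have in fact already written down the identity that carries the entire proof --- $d\sigma_H=\pm\,\Phi^{\ast}\rho^{\epsilon}$, so that parallel mean curvature forces $\Phi^{\ast}\rho^{\epsilon}=0$ --- but you then set it aside and stake the crucial rank-two exclusion on a Codazzi-equation analysis that you never carry out and yourself flag as the hardest step. That is the gap: the paper's proof contains no Codazzi system at all. Writing $d\phi(e_i)$ and $d\psi(e_i)$ in orthonormal frames with coefficients $\lambda_j,\mu_j$ and $\bar\lambda_j,\bar\mu_j$, the vanishing of $\Phi^{\ast}\rho^{\epsilon}$ is exactly the displayed equation preceding (\ref{e:consd}); on a rank-two piece the Jacobian factor $\mu_1\lambda_2-\mu_2\lambda_1$ is nonzero, and the Lagrangian condition together with the orthonormality of $(e_1,e_2)$ forces the normalizations (\ref{e:niequat}) (imported from Theorem 3.5 of \cite{Ge1}), whence $\kappa_1(\phi(p))=\epsilon\kappa_2(\psi(p))$ for every $p$. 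Since in rank two $\phi$ and $\psi$ are local diffeomorphisms, this pointwise identity holds on open subsets of $\Sigma_1$ and $\Sigma_2$, and it contradicts (i), because it gives $\epsilon\kappa_1\kappa_2=\kappa_2^2\geq 0$ at non-flat points, and (ii), because it forces the non-flat factor to be flat on an open set. Your sketch never extracts this identity: saying that $\rho|_L=0$ ``couples'' the curvatures has no force without the normalization identities (\ref{e:niequat}), which is precisely where the Lagrangian condition bites.

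Moreover, the mechanism you propose in place of this is wrong in principle. You claim the Codazzi system should force the off-diagonal components of the cubic form $C$ to vanish, and that these ``measure the failure of $r_i\le 1$''. They do not: the projected ranks are first-order invariants of the immersion, while $C$ is built from the second fundamental form. For instance, the graph of an isometry $f$ of a Lorentzian surface $(\Sigma,g)$ satisfying $f^{\ast}\omega=-\omega$ is a totally geodesic (hence $C\equiv 0$) $\Omega^{+}$-Lagrangian surface of projected rank two in $(\Sigma\times\Sigma,G^{+})$, and Proposition \ref{p:prokk} exhibits $G^{-}$-minimal --- so in particular parallel-$H$ --- Lagrangian surfaces of rank two in the flat case ${\mathbb D}^2$, showing that no amount of information about vanishing components of $C$ can yield the rank reduction; only the curvature hypotheses can, and they enter exactly through $\Phi^{\ast}\rho^{\epsilon}=0$ as above. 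Your remaining steps are sound and match the paper: projected rank zero is impossible, a rank-one Lagrangian is locally a product of non-null curves as in Proposition \ref{t:rankonethe}, and then (\ref{e:meancurvlll}) together with $\nabla^{\perp}H=0$ gives that $k_{\phi}$ and $k_{\psi}$ are constant.
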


\begin{Note}Note that the Theorem 3 of \cite{Ge1} holds true for Lagrangian immersions with parallel mean curvature vector in $(\Sigma_1\times\Sigma_2, G^{\epsilon},J,\Omega^{\epsilon})$, where $(\Sigma_1,g_1)$ and $(\Sigma_2,g_2)$ are Riemannian two manifolds. \end{Note}

We show that Theorem \ref{t:notflattt} is no longer true when $(\Sigma_1,g_1)$ and $(\Sigma_2,g_2)$ are both flat. In particular, we construct minimal Lagrangian immersions in the paracomplex Euclidean space ${\mathbb D}^2$, endowed with the pseudo-Hermitian product structure, such that they are not a product of straight lines in ${\mathbb D}$  (Proposition \ref{p:prokk}).

If ${\mbox{d}}{\mathbb S}^2$ denotes the anti-De Sitter 2-space, the Theorem \ref{t:notflattt} tells us that $\Omega^-$-Lagrangian surfaces in ${\mbox{d}}{\mathbb S}^2\times {\mbox{d}}{\mathbb S}^2$ with parallel mean curvature vector are locally the product of curves in ${\mbox{d}}{\mathbb S}^2$ with costant curvature. The following theorem proves an analogue result with Theorem 1 in \cite{CU}: 
\begin{Thm}\label{t:desitterw}
Every $\Omega^+$-Lagrangian immersions in ${\mbox{d}}{\mathbb S}^2\times {\mbox{d}}{\mathbb S}^2$, with non null parallel mean curvature vector, is locally the product $(\gamma_1,\gamma_2)$ of curves in ${\mbox{d}}{\mathbb S}^2$ with costant curvatures $k_{1},k_2$, respectively such that $k_1^2+k_2^2>0$.
\end{Thm}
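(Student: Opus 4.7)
The plan is to adapt the argument of Castro--Urbano (\cite{CU}, Theorem~1) to the para-K\"ahler Einstein product $({\mbox{d}}{\mathbb S}^2\times{\mbox{d}}{\mathbb S}^2, G^+, J, \Omega^+)$. Since ${\mbox{d}}{\mathbb S}^2$ has constant Gauss curvature, $\kappa_1=\kappa_2$ is constant, and Theorem~\ref{t:einsteinandconformlly} guarantees that $G^+$ is Einstein. This is precisely the case excluded by Theorem~\ref{t:notflattt}, so a separate argument is indeed required, and one should expect the Einstein structure to play a central role in the rigidity.

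Let $f:\Sigma\to {\mbox{d}}{\mathbb S}^2\times{\mbox{d}}{\mathbb S}^2$ be the Lagrangian immersion with non-null, parallel mean curvature vector $H$. Since $\Sigma$ is Lagrangian and $J$ is parallel with respect to the Levi-Civita connection of $G^+$, the vector field $V:=JH$ is tangent to $\Sigma$ and has constant non-zero norm $|V|^2=-G^+(H,H)$. The Weingarten formula, together with $\nabla^{\perp}H=0$, gives $\nabla^{\Sigma}_X V = -A_H X$, where $A_H$ is the shape operator in the direction of $H$. Moreover, for a Lagrangian submanifold of a para-K\"ahler manifold the cubic form
\[
C(X,Y,Z):=\Omega^+(h(X,Y),Z)=G^+(h(X,Y),JZ)
\]
is totally symmetric in $X,Y,Z$, which allows $A_H$ to be recovered from $C(\cdot,\cdot,V)/|V|^2$ and in particular forces $A_H$ to be self-adjoint.

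Because $\kappa_1=\kappa_2$ is constant, the ambient curvature term $R^{G^+}(X,Y)H$ has no normal component along $\Sigma$, so the Codazzi equation reduces to $(\nabla^{\Sigma}_X A_H)Y=(\nabla^{\Sigma}_Y A_H)X$. Feeding this back into $\nabla^{\Sigma}V=-A_H$ and using the total symmetry of $C$, one derives a rank-$2$ algebraic equation satisfied by $A_H$ whose eigenstructure produces two distinguished (real) one-dimensional distributions on $\Sigma$. The product structure of ${\mbox{d}}{\mathbb S}^2\times{\mbox{d}}{\mathbb S}^2$ gives an almost product endomorphism $P$ (with $P^2=\mathrm{Id}$) whose $\pm 1$-eigenspaces are the factor tangent distributions $T{\mbox{d}}{\mathbb S}^2\oplus 0$ and $0\oplus T{\mbox{d}}{\mathbb S}^2$; the interplay between $P$ and $J$ (which will be built in Section~\ref{s:construction}) identifies the two eigendistributions of $A_H$ on $\Sigma$ with the intersections $T\Sigma\cap (T{\mbox{d}}{\mathbb S}^2\oplus 0)$ and $T\Sigma\cap (0\oplus T{\mbox{d}}{\mathbb S}^2)$. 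Both distributions are parallel and integrable, so $f(\Sigma)$ splits locally as $\gamma_1\times\gamma_2$ with $\gamma_i\subset{\mbox{d}}{\mathbb S}^2$. The parallel mean curvature condition then forces each $\gamma_i$ to have constant geodesic curvature $k_i$, and since $H$ is non-null, in particular $H\neq 0$, not both curves can be geodesics, yielding $k_1^2+k_2^2>0$.

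The principal technical obstacle lies in handling the indefinite signature: the induced metric on $\Sigma$ may be Riemannian, Lorentzian, or degenerate, and a priori $A_H$ need not be diagonalisable over the reals. One must therefore analyse the possible Jordan normal forms of $A_H$ compatible simultaneously with the constancy of $|V|^2$, the symmetry of $C$, and the Codazzi equation, and rule out the non-semisimple cases in which the tangent splitting above would fail. A secondary difficulty, again stemming from the para-K\"ahler (as opposed to K\"ahler) setting, is that several sign changes relative to \cite{CU} appear when identifying the tangent eigendistributions with the ambient factor distributions; these must be tracked carefully when translating the argument.
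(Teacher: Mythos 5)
Your opening moves match the paper's strategy (adapt Castro--Urbano, exploit that $G^+$ is Einstein on ${\mbox{d}}{\mathbb S}^2\times{\mbox{d}}{\mathbb S}^2$), but two things go wrong, one small and one fatal. The small one: from $\nabla^{\perp}H=0$ and the Lagrangian condition you do \emph{not} get $\nabla^{\Sigma}_X V=-A_HX$ for $V=JH$. Since $J$ exchanges $T\Sigma$ and $N\Sigma$ along a Lagrangian, the ambient derivative $\bar\nabla_X(JH)=J\bar\nabla_XH=-JA_HX$ is purely \emph{normal}; hence the tangential part vanishes, i.e. $V$ is parallel, $\nabla^{\Sigma}V=0$, and $h(X,V)=-JA_HX$. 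This stronger fact is exactly what the paper uses (phrased via the Einstein property, $JH=\nabla\beta$ locally, and the Bochner formula) to conclude that the induced metric is flat and the normal curvature vanishes --- conclusions your version of the Weingarten identity cannot deliver.

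The fatal gap is the splitting step. You identify the eigendistributions of $A_H$ with $T\Sigma\cap({\mbox{T}}{\mbox{d}}{\mathbb S}^2\oplus 0)$ and $T\Sigma\cap(0\oplus {\mbox{T}}{\mbox{d}}{\mathbb S}^2)$, but those intersections are zero precisely when the immersion has projected rank two, and ruling out rank two \emph{is} the entire content of the theorem; as written you assume the conclusion. Note that Theorem \ref{t:notflattt} fails here exactly because $\kappa_1=\epsilon\kappa_2$: closedness of $a_{H^{\epsilon}}$ yields no contradiction with rank two, so a genuinely new quantitative input is required, and your sketch never produces the promised ``rank-$2$ algebraic equation for $A_H$.'' Moreover your claim that the ambient curvature term in Codazzi has no normal component is false: the curvature of the product metric $G^+$ is not that of a space form, and this term is precisely where the associated Jacobian $C=\lambda_2\mu_1-\lambda_1\mu_2$ enters. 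The paper's proof shows directly that $C=0$: using flatness of the induced metric, flat normal bundle, and (para)holomorphic coordinates $z$, it combines the Gauss and Ricci equations with the identities (\ref{e:protosimantiko1})--(\ref{e:protosimantiko3}) to obtain $C^2=-4\epsilon_1|u_z|^2e^{-2u}$; since $g$ is flat one may take $u$ constant, forcing $C=0$, i.e. projected rank one. Proposition \ref{t:rankonethe} then gives the local product of constant-curvature curves, and $k_1^2+k_2^2>0$ follows since $H\neq 0$. Your proposal contains no substitute for this computation, which is the heart of the argument.
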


Minimality is the first order condition for a submanifold to be volume-extremizing in its homology class. Minimal submanifolds that are local extremizers of the volume are called \emph{stable minimal submanifolds}. The stability of a minimal submanifold is determined by the monotonicity of the second variation of the volume functional. The second order condition for a minimal submanifold to be volume-extremizing was first derived by Simons \cite{Si} and then Harvey while Lawson have proven that minimal Lagrangian submanifolds of a Calabi-Yau manifold is calibrated, which implies by Stokes theorem, that are volume-extremizing \cite{HL1}. For the stability of minimal Lagrangian surface in $(\Sigma_1\times\Sigma_2, G^{\epsilon},J,\Omega^{\epsilon})$ we prove the following:

\begin{Thm}\label{t:stabilityofminimal} Assume that the Gauss curvatures $\kappa_1$ and $\kappa_2$ satisfy the conditions of Theorem \ref{t:notflattt}.

\emph{(i)} If $\kappa_1$ and $\kappa_2$ are both nonpositive (nonnegative), then every $G^+$-minimal Lagrangian surface $\phi\times\psi$ where the geodesics $\phi,\psi$ are spacelike (timelike), is stable.

\emph{(ii)} If $\kappa_1$ is nonpositive and $\kappa_2$ is nonnegative, then every $G^-$-minimal Lagrangian surface $\phi\times\psi$ where the geodesics $\phi$ is spacelike and $\psi$ is timelike, is stable.
\end{Thm}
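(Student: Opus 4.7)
The approach is to compute the second variation of the induced volume functional at $L=\phi\times\psi$ for compactly supported normal variations and verify that it is non-negative. Because $L$ is $\Omega^\epsilon$-Lagrangian, $J$ restricts to a fibrewise isomorphism $TL\to NL$ (an anti-isometry by the para-K\"ahler compatibility $G^\epsilon(J\cdot,J\cdot)=-G^\epsilon(\cdot,\cdot)$), so every normal vector field can be written $V=JW$ with $W=f\dot\phi+h\dot\psi\in\Gamma(TL)$, for smooth functions $f,h$ on $L$. Parametrising $\phi,\psi$ by arc-length gives an orthonormal frame of $TL$ whose signs are governed by the causal-character hypotheses on $\phi,\psi$; these together with the choice of $\epsilon$ ensure that the induced metric $G^\epsilon|_L$ is (positive) definite, so that ``stable'' has its standard Riemannian meaning and the $L^2$-type norms below are genuine norms.

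Next, I would adapt the second variation formula for Lagrangian submanifolds in K\"ahler manifolds, in the spirit of Oh's Hodge-theoretic expression, to the present neutral-signature para-K\"ahler setting. Writing $V=JW$ and setting $\alpha=W^\flat$, the formula takes the schematic form
\[\delta^2\mathrm{Vol}(V)=\int_L\bigl(|d\alpha|^2+|\delta\alpha|^2-Q(\alpha,\alpha)\bigr)\,dA_L,\]
where $Q$ is a quadratic form built from the ambient Ricci tensor of $G^\epsilon$ together with a shape-operator contribution, the latter being controlled by the mixed derivatives of $G^\epsilon$ across the two factors and vanishing on a product of geodesics. Since $G^\epsilon|_L$ is definite, the first two summands are manifestly non-negative, and stability reduces to controlling the sign of $Q$.

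Evaluating $Q$ on tangent fields of the form $J\dot\phi$ and $J\dot\psi$ separates its dependence, yielding $Q(\alpha,\alpha)=\sigma_1\kappa_1 f^2+\sigma_2\kappa_2 h^2$ for suitable signs $\sigma_j\in\{\pm 1\}$ that encode $\epsilon$ and the causal type of $\phi,\psi$. The needed curvature identities are essentially the same ones used in the proof of Theorem \ref{t:einsteinandconformlly}, specialised to the product tangent frame. The hypotheses (i) and (ii) are then chosen precisely so that $\sigma_j\kappa_j\le 0$ in each case: for (i) with $\epsilon=+1$ and both geodesics spacelike (resp.\ timelike) both signs come out positive, while for (ii) with $\epsilon=-1$ and mixed causal character one obtains $\sigma_1=+1$, $\sigma_2=-1$ matched to the opposite sign hypotheses on $\kappa_1$ and $\kappa_2$. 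In both cases $-Q(\alpha,\alpha)\ge 0$ pointwise, hence $\delta^2\mathrm{Vol}(V)\ge 0$ for every compactly supported $V$, which is stability.

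The main technical obstacle is the sign bookkeeping forced by the neutral signature and by the anti-isometric action of $J$ on $TL$ versus $NL$. The classical Oh formula has to be re-derived with these signs tracked carefully: the identification $W\mapsto JW$ introduces a minus sign that propagates through the expressions for $|\nabla^\perp V|^2$ and for the Ricci contribution, and the Hodge norms $|d\alpha|^2,|\delta\alpha|^2$ must be taken with respect to the induced definite metric rather than the ambient $G^\epsilon$. The content of the theorem is that once these conventions are consistently fixed and one exploits the product structure to decouple the $f$- and $h$-dependence, the stated curvature and causal hypotheses are exactly what is required to make every term in the integrand carry the stabilising sign.
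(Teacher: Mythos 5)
Your overall strategy---write the normal field as $V=JW$ with $W=f\dot\phi+h\dot\psi$, observe that the shape-operator contribution vanishes because a product of geodesics is totally geodesic, and reduce stability to the sign of curvature terms---is the same as the paper's, which computes directly with $X=X^1J\Phi_s+X^2J\Phi_t$ in the standard second variation formula. But there is a genuine gap, and it sits precisely in the step you defer as ``sign bookkeeping'': the direction of the inequality. Since $J$ is an anti-isometry, $G^{\epsilon}(J\cdot,J\cdot)=-G^{\epsilon}(\cdot,\cdot)$, and $\nabla J=0$, along the totally geodesic surface one has $G^{\epsilon}(\nabla^{\bot}V,\nabla^{\bot}V)=-G^{\epsilon}(\nabla W,\nabla W)$, which is \emph{non-positive} once the induced metric is definite. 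So the derivative terms you write as $+|d\alpha|^2+|\delta\alpha|^2$ in your Oh-type formula in fact enter with a global minus sign in the para-K\"ahler setting: the paper's integrand is $-(X^1_s)^2-(X^2_s)^2-(X^1_t)^2-(X^2_t)^2+\epsilon_{\phi}(X^1)^2\kappa_1+\epsilon_{\psi}(X^2)^2\kappa_2$, obtained after invoking the necessary condition (cited to \cite{An2}) that stability in a neutral ambient metric forces the induced metric to be definite, i.e. $\epsilon_{\phi}=\epsilon\epsilon_{\psi}$---a point you assume from the hypotheses rather than derive. Hence under the stated curvature and causal hypotheses the second variation is $\leq 0$: these surfaces are local \emph{maximizers}, consistent with the $-(\Delta u)^2$ in the paper's Hamiltonian second variation formula (\ref{e:hamstb}) and with the phrasing of Theorem \ref{t:hstability}. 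Your conclusion $\delta^2\mathrm{Vol}(V)\geq 0$ points the wrong way.

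Under the paper's definition of stability (monotonicity of the second variation) the theorem survives this flip, but your proposal does not actually establish monotonicity: you assert a schematic formula whose overall sign is incorrect, you explicitly flag the propagation of signs through $W\mapsto JW$ as the main technical obstacle, and you leave it unresolved---yet that propagation \emph{is} the entire content of this theorem. Moreover, your one concrete sign claim is internally inconsistent: in case (i) with timelike geodesics you say ``both signs come out positive,'' but with $\kappa_j\geq 0$ your own requirement $\sigma_j\kappa_j\leq 0$ forces $\sigma_j=-1$ (and indeed the paper has $\epsilon_{\phi}=\epsilon_{\psi}=-1$ there). To repair the argument, carry out the computation as the paper does: with $X=X^1J\Phi_s+X^2J\Phi_t$ one finds $G^{\epsilon}(\nabla^{\bot}X,\nabla^{\bot}X)=-(X^1_s)^2-(X^2_t)^2-\epsilon\epsilon_{\phi}\epsilon_{\psi}\big((X^2_s)^2+(X^1_t)^2\big)$, $G^{\epsilon}(A_X,A_X)=0$, and $G^{\epsilon}(R^{\bot}(X),X)=\epsilon_{\phi}(X^1)^2\kappa_1+\epsilon_{\psi}(X^2)^2\kappa_2$; imposing $\epsilon_{\phi}=\epsilon\epsilon_{\psi}$ then makes every term of the integrand non-positive in each of the cases (i) and (ii).
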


A Lagrangian submanifold $\Sigma$ of a (para-)  K\"ahler manifold is said to be \emph{Hamiltonian minimal} (or \emph{$H$-minimal}) if it is a critical point of the volume functional with respect to Hamiltonian variations. A $H$-minimal Lagrangian submanifold is characterized by the fact that its mean curvature vector is divergence-free, that is, ${\mbox{div}}JH=0$, where $J$ is the (para-) complex structure and $H$ is the mean curvature vector of $\Sigma$.  If the second variation of the volume functional of a $H$-minimal submanifold is monotone for any Hamiltonian compactly supported variation, it is said to be \emph{Hamiltonian stable} (or \emph{$H$-stable}). In \cite{Oh1} and \cite{Oh2}, the second variation formula of a $H$-minimal submanifold has been derived in the case of a K\"ahler manifold, while for the pseudo-K\"ahler case it has been given in \cite{AnGer}. The next theorem, in Section \ref{s:hamiltstabilitysection}, investigates the $H$-stability of projected rank one Hamiltonian $G^{\epsilon}$-minimal surfaces in $\Sigma_1\times\Sigma_2$:

\begin{Thm}\label{t:hstability} Let $\Phi=(\phi,\psi)$ be of projected rank one Hamiltonian $G^{\epsilon}$-minimal immersion in $(\Sigma_1\times\Sigma_2,G^{\epsilon})$ such that $\epsilon_{\phi}\kappa_1\leq 0$ and $\epsilon_{\psi}\kappa_2\leq 0$ along the curves $\phi$ and $\psi$ respectively. Then $\Phi$ is a local maximizer of the volume in its Hamiltonian isotopy class. \end{Thm}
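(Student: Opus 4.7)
The plan is to apply the second variation formula for the volume functional of an $H$-minimal Lagrangian submanifold under Hamiltonian variations in a para-K\"ahler ambient, as derived in \cite{AnGer}. A Hamiltonian deformation of $\Sigma = \phi \times \psi$ is generated by a compactly supported function $f \in C^\infty_c(\Sigma)$, producing the normal vector field $\xi_f = J\nabla f$. The second variation then takes the schematic form
\[
\delta^2 V(f) \;=\; \int_\Sigma \Bigl( (\Delta f)^2 \;-\; \operatorname{Ric}^{G^\epsilon}(J\nabla f, J\nabla f) \;+\; Q(H,f) \Bigr)\, d\mu_\Sigma ,
\]
with the signs in front of the $(\Delta f)^2$ and Ricci terms fixed by the para-K\"ahler signature. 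Because $\Phi$ is assumed $H$-minimal, $Q(H,f)$ either vanishes or collapses to a term involving the geodesic curvatures of the curves $\phi, \psi$ which can be controlled along the projected rank one structure.

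I would then exploit the product structure of $\Sigma = \phi \times \psi$. In coordinates $(s,t)$ with $\partial_s, \partial_t$ tangent to $\phi, \psi$, the induced metric is diagonal with causal signs $\epsilon_\phi, \epsilon_\psi$ and the Laplacian separates as $\Delta = \epsilon_\phi \partial_s^2 + \epsilon_\psi \partial_t^2$. Using the explicit para-K\"ahler structure built in Section \ref{s:construction}, the ambient Ricci contraction $\operatorname{Ric}^{G^\epsilon}(J\nabla f, J\nabla f)$ rearranges (after integration by parts) into a linear combination of $\epsilon_\phi \kappa_1 (\partial_t f)^2$ and $\epsilon_\psi \kappa_2 (\partial_s f)^2$ against a pure-square remainder whose overall sign is dictated by the Lorentzian signature of each factor.

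Under the hypothesis $\epsilon_\phi \kappa_1 \le 0$ and $\epsilon_\psi \kappa_2 \le 0$, each curvature contribution has the correct sign, and together with the remainder term the integrand of $\delta^2 V(f)$ is pointwise non-positive. This forces $\delta^2 V(f) \le 0$ for every admissible $f$, which is exactly the assertion that $\Phi$ is a local maximizer of the volume within its Hamiltonian isotopy class.

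The main obstacle I anticipate is correctly extracting the signs in the second variation formula in the para-K\"ahler setting: compared with the K\"ahler case treated by Oh \cite{Oh1,Oh2}, there is a sign flip that turns $H$-stable critical points into local \emph{maximizers} rather than minimizers, and one must verify this flip carefully using \cite{AnGer}. A secondary technical point is that the induced metric on $\Sigma$ is in general indefinite, so $\Delta$ is hyperbolic and classical spectral or positivity methods are unavailable; the argument must therefore rely on pointwise sign analysis of the integrand rather than on eigenvalue inequalities. Specializing to the flat case $\kappa_1 = \kappa_2 = 0$ and comparing with the Riemannian counterpart in \cite{Ge1} should provide useful consistency checks.
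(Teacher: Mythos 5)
Your overall strategy is the same as the paper's: substitute the Hamiltonian field $X=J\nabla u$ into the second variation formula of \cite{AnGer}, compute in the arclength coordinates $(s,t)$ of the rank-one product $\phi\times\psi$, and conclude by pointwise sign analysis of the integrand (correctly so --- no spectral argument is available for the possibly indefinite induced metric). However, the two steps you defer are precisely where the content of the proof lies, and as written your version would fail. First, the sign of the $(\Delta f)^2$ term: you write the formula with $+(\Delta f)^2$ and only conjecture a flip. In the formula the paper actually uses, equation (\ref{e:hamstb}), this term enters as $-(\Delta u)^2$. This is not a bookkeeping detail: in the test case $\kappa_1=\kappa_2=0$ with $\phi,\psi$ geodesics the integrand reduces to $\pm(\Delta u)^2$, so this sign alone decides between ``local maximizer'' and ``local minimizer''; with the sign you wrote down, the pointwise non-positivity asserted in your final step is simply false. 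You flag this as the main obstacle, but resolving it \emph{is} the theorem.

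Second, your treatment of $Q(H,f)$ is a genuine gap. Since $\Phi$ is $H$-minimal but in general not minimal (by Proposition \ref{t:rankonethe} the curves are Cornu spirals, so $H\neq 0$ and by (\ref{e:meancurvlll}) $2H^{\epsilon}=\epsilon_{\phi}k_{\phi}J\Phi_s+\epsilon\epsilon_{\psi}k_{\psi}J\Phi_t$), the two $H$-dependent terms $2G(h(\nabla u,\nabla u),nH)+G^2(nH,J\nabla u)$ in (\ref{e:hamstb}) are not individually signed, and they do not vanish. The paper's computation shows they combine into the non-positive perfect square $-(\epsilon_{\phi}u_s k_{\phi}-\epsilon\epsilon_{\psi}u_t k_{\psi})^2$; saying they ``can be controlled'' is not enough, because a sign-indefinite remainder linear in $k_{\phi},k_{\psi}$ would destroy the conclusion --- the Cornu-spiral curvatures $k_{\phi}(s)=\lambda_{\phi}s+\mu_{\phi}$ are unbounded, so no curvature hypothesis on $\kappa_1,\kappa_2$ could absorb such a term. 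Two smaller slips: your Ricci contribution pairs $\kappa_1$ with $(\partial_t f)^2$, whereas $\Phi_s=(\phi',0)$ is tangent to $\Sigma_1$, so $\kappa_1$ pairs with $u_s^2$ (harmless for the sign conclusion here, and obtained pointwise, with no integration by parts needed); and the induced Laplacian is $\epsilon_{\phi}\partial_s^2+\epsilon\epsilon_{\psi}\partial_t^2$ --- the factor $\epsilon$ you dropped matters, since the paper's first term is $-(\epsilon_{\phi}u_{ss}+\epsilon\epsilon_{\psi}u_{tt})^2$.
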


\vspace{0.1in}

\noindent {\bf Acknowledgements.} The author would like to thank H. Anciaux, B. Guilfoyle and W. Klingenberg for their helpful and valuable suggestions and comments.

\vspace{0.1in}

\section{The Product para-K\"ahler structure}\label{s:construction}

Let $(\Sigma,g)$ be a two dimensional oriented manifold endowed with a non degenerate Lorentzian metric $g$. Then in a neighbourhood of any point there exist local isothermic coordinates $(s,t)$, i.e., $g_{ss}=-g_{tt}$ and $g_{st}=0$ (see \cite{An2}).
The endomorphism $j:{\mbox{T}}\Sigma\rightarrow {\mbox{T}}\Sigma$ defined by $j(\partial/\partial s)=\partial/\partial t$ and $j(\partial/\partial t)=\partial/\partial s$, satisfies $j^2={\mbox{Id}}_{T\Sigma}$ and $g(j.,j.)=-g(.,.)$. It follows that $j$ defines a paracomplex structure and if we set $\omega(\cdot, \cdot)=g(j_k\cdot, \cdot)$, the quadruple $(\Sigma,g,j,\omega)$ is a 2-dimensional para-K\"ahler manifold. 

For $k=1,2$, let $(\Sigma_k,g_k,j_k,\omega_k)$  be the para-K\"ahler structures defined as before, and consider the product structure $\Sigma_1\times\Sigma_2$. The identification $X\in {\mbox{T}}(\Sigma_1\times\Sigma_2)\simeq (X_1,X_2)\in {\mbox{T}}\Sigma_1\oplus {\mbox{T}}\Sigma_2$, gives the natural splitting ${\mbox{T}}(\Sigma_1\times\Sigma_2)={\mbox{T}}\Sigma_1\oplus {\mbox{T}}\Sigma_2$. For $(x,y)\in \Sigma_1\times\Sigma_2$, let $X=(X_1,X_2)$ and $Y=(Y_1,Y_2)$ be two tangent vectors in ${\mbox{T}}_{(x,y)}(\Sigma_1\times\Sigma_2)$ and define the metric $G^{\epsilon}$ by
\[
G^{\epsilon}_{(x,y)}(X,Y)=g_1(X_1,Y_1)(x)+\epsilon g_2(X_2,Y_2)(y),
\]
where $\epsilon\in\{-1,1\}$. If $\nabla$ denotes the Levi-Civita connection with respect to the metric $G^{\epsilon}$, we have
$\nabla_X Y=(D^1_{X_1} Y_1,D^2_{X_2} Y_2)$, where $D^1,D^2$ denote the Levi-Civita connections with respect to the metrics $g_1$ and $g_2$, respectively. The endomorphism $J\in {\mbox{End}}({\mbox{T}}\Sigma_1\oplus {\mbox T}\Sigma_2)$ defined by $J=j_1\oplus j_2$ is an almost paracomplex structure on $\Sigma_1\times\Sigma_2$, while the two-forms 
$\Omega^{\epsilon}=\pi_1^{\ast}\omega_1+\epsilon\pi_2^{\ast}\omega_2$,
 are symplectic structures, with $\pi_i:\Sigma_1\times \Sigma_2\rightarrow \Sigma_i$ is the $i$-th projection.

\vspace{0.1in}

{\indent\sc Proof of Theorem \ref{t:einsteinandconformlly}:}
It is clear that the Nijenhuis tensor of $J$ given by (\ref{e:nijenhuis}) vanishes. Furthermore, $J$ and $\Omega^{\epsilon}$ are compatible, i.e., $\Omega^{\epsilon}(J.,J.)=-\Omega^{\epsilon}(.,.)$ and thus the quadruples $(\Sigma_1\times\Sigma_2, G^{\epsilon},J,\Omega^{\epsilon})$  are para-K\"ahler structures.

Let $(e_1,e_2)$ and $(v_1,v_2)$ be orthonormal frames on $\Sigma_1$ and $\Sigma_2$ respectively, both oriented such that $|e_1|^2=|v_1|^2=1$ and $|e_2|^2=|v_2|^2=-1$ and consider an  orthonormal frame $(E_1,E_2,E_3, E_4)$ of $G^{\epsilon}$ defined by 
\[
E_1=(e_1,v_1+v_2),\quad E_2=(e_2,v_1+v_2),
\quad 
 E_3=(\epsilon(e_2-e_1),v_1),\quad E_4=(\epsilon(e_1-e_2),v_2).
\]
The Ricci curvature tensor ${\mbox{Ric}}^{\epsilon}$ of $G^{\epsilon}$ gives
\[
{\mbox Ric}^{\epsilon}(E_1,E_1)_{(x,y)}=\kappa_1(x),\qquad
{\mbox Ric}^{\epsilon}(E_3,E_3)_{(x,y)}=\kappa_2(y),
\]
and using the fact ${\mbox Ric}^{\epsilon}(J.,J.)=-{\mbox Ric}^{\epsilon}(.,.)$, the scalar curvatute ${\mbox R}^\epsilon$ is:
\begin{equation}\label{e:scpos}
{\mbox  R}^\epsilon=2(\kappa_1(x)+\epsilon\kappa_2(y)).
\end{equation}
If $G^{\epsilon}$ is conformally flat, it is scalar flat \cite{AR} and thus, from (\ref{e:scpos}), the Gauss curvatures $\kappa_1, \kappa_2$ are constants with $\kappa_1=-\epsilon\kappa_2$.

Conversely, assuming that $\kappa_1=-\epsilon\kappa_2=c$, where $c$ is a real constant and following the same computations with the proof of Theorem 2.2 in \cite{Ge1}, we prove that the self-dual ${\mbox  W}^+$ and the anti-self-dual part ${\mbox W}^-$ of the Weyl tensor vanish and therefore the metric $G^\epsilon$ is conformally flat.

On the other hand, a direct computation shows that ${\mbox  Ric}^\epsilon(E_i,E_j)=cG^\epsilon(E_i,E_j)$ if and only if $\kappa_1=\epsilon\kappa_2=c$, where $c$ is a real constant and the theorem follows.
$\Box$ 

\begin{Cor}
Let $(\Sigma,g)$ be a Lorentzian two manifold. The para-K\"ahler metric $G^-$ \emph{(}resp. $G^+$\emph{)} of the four dimensional K\"ahler manifold $\Sigma\times\Sigma$ is conformally flat  \emph{(}resp. Einstein\emph{)} if and only if the metric $g$ is of constant Gaussian curvature.
\end{Cor}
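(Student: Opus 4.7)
The plan is to derive the corollary as an immediate specialization of Theorem~\ref{t:einsteinandconformlly} to the diagonal case $\Sigma_1=\Sigma_2=\Sigma$ with $g_1=g_2=g$. Under this identification we have $\kappa_1(x)=\kappa_2(x)=\kappa(x)$ at every point, so all the arithmetic conditions on $\kappa_1,\kappa_2$ collapse into a single condition on the Gauss curvature $\kappa$ of $g$.

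Concretely, I would argue as follows. For the conformally flat statement, take $\epsilon=-1$; Theorem~\ref{t:einsteinandconformlly} asserts that $G^{-}$ is conformally flat if and only if $\kappa_1,\kappa_2$ are both constant with $\kappa_1=\kappa_2$. Since in our situation $\kappa_1\equiv\kappa\equiv\kappa_2$ automatically holds, the equality imposes nothing new, and the only remaining requirement is that $\kappa$ be constant. For the Einstein statement, take $\epsilon=+1$; the theorem gives that $G^{+}$ is Einstein if and only if $\kappa_1,\kappa_2$ are both constant with $\kappa_1=\kappa_2$, which again reduces under the diagonal identification to the single condition that $\kappa$ be constant.

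There is essentially no obstacle: the corollary is a verification that the two seemingly different conditions "$\kappa_1=-\epsilon\kappa_2$ (conformally flat)" and "$\kappa_1=\epsilon\kappa_2$ (Einstein)" from the theorem each trivialize into a pure constancy condition once we set $\kappa_1=\kappa_2$, provided we pair conformal flatness with $\epsilon=-1$ and Einstein with $\epsilon=+1$. The only thing worth flagging is that the other sign choices are degenerate in the sense that, with $\kappa_1=\kappa_2=\kappa$, demanding $G^{+}$ conformally flat or $G^{-}$ Einstein would force $\kappa\equiv 0$, so the pairing in the corollary is the natural (and sharp) one.
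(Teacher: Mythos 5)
Your proposal is correct and matches the paper's (implicit) reasoning exactly: the corollary is stated without a separate proof precisely because it is the diagonal specialization $\Sigma_1=\Sigma_2=\Sigma$, $\kappa_1=\kappa_2=\kappa$ of Theorem~\ref{t:einsteinandconformlly}, with the sign pairing $\epsilon=-1$ for conformal flatness and $\epsilon=+1$ for Einstein making the curvature-equality condition vacuous and leaving only constancy of $\kappa$. Your remark that the opposite pairings would force $\kappa\equiv 0$ is a correct and worthwhile sanity check, though not needed for the statement itself.
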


\vspace{0.1in}

\section{Lagrangian immersions in $\Sigma_1\times\Sigma_2$}\label{s:rankoneranotwo}

In this section, we study Lagrangian immersions in the product $\Sigma_1\times\Sigma_2$ endowed with the para-k\"ahler structure $(G^\epsilon,J,\Omega^\epsilon)$ constructed in section \ref{s:construction}. An immersion $\Phi:S\rightarrow\Sigma_1\times\Sigma_2$ of a surface $S$ is said to be Lagrangian if $\Phi^{\ast}\Omega^\epsilon$ vanishes for every point of $S$. In this case, the paracomplex structure $J:{\mbox T}S\rightarrow {\mbox N}S$ is a bundle isomorphism between the tangent bundle ${\mbox T}S$ and the normal bundle ${\mbox N}S$. It is well known that a Lagrangian immersion of a pseudo-Riemannian K\"ahler manifold is indefinite if the K\"ahler metric is indefinite. Althought the signature of the para-K\"ahler metric is always neutral, a Lagrangian immersion can be either Riemannian or indefinite. If $\pi_i$ are the projections of $\Sigma_1\times\Sigma_2$ onto $\Sigma_i$, $i=1,2$, we denote by $\phi$ and $\psi$ the mappings $\pi_1\circ\Phi$ and $\pi_2\circ\Phi$, respectively, and we write $\Phi=(\phi,\psi)$.
\begin{Def}\label{d:defiprojectedrank}
The immersion $\Phi=(\phi,\psi): S\rightarrow\Sigma_1\times\Sigma_2$ is said to be of \emph{projected rank zero} at a point $p\in S$ if either ${\mbox rank}(\phi(p))=0$ or ${\mbox rank}(\psi(p))=0$. $\Phi$ is of \emph{projected rank one} at $p$ if either ${\mbox rank}(\phi(p))=1$ or ${\mbox rank}(\psi(p))=1$. Finally, $\Phi$ is of \emph{projected rank two} at $p$ if ${\mbox rank}(\phi(p))={\mbox rank}(\psi(p))=2$. 
\end{Def}
Note that the fact that $\Phi$ is an immersion, implies that $\Phi$ is locally either of projected rank zero, one or two.

\vspace{0.1in}

%\subsection{Projected rank zero case}

Let $\Phi=(\phi,\psi)$ be of projected rank zero immersion in $\Sigma_1\times\Sigma_2$. Assuming, without loss of generality, that ${\mbox rank}(\phi)=0$, the map $\phi$ is locally a constant function and the map $\psi$ is a local diffeomorphism. Following the same argument with Proposition 3.2 in \cite{Ge1}, we show that there are no Lagrangian immersions in $\Sigma_1\times\Sigma_2$ of projected rank zero.

\vspace{0.1in}

%\subsection{Projected rank one and projected rank two Lagrangian surfaces}

In order to discuss Lagrangian surfaces of projected rank one, we need to extend the definition of Cornu spirals for  a pseudo-Riemannian two manifold.

\begin{Def}\label{d:cornuspiral}
Let $(\Sigma,g)$ be a  pseudo-Riemannian two manifold. A non-null regular curve $\gamma$ of $\Sigma$ is called a \emph{Cornu spiral of parameter $\lambda$} if its curvature $\kappa_{\gamma}$ is a linear function of its arclength parameter such that $\kappa_{\gamma}(s)=\lambda s+\mu$, where $s$ is the arclength and $\lambda,\mu$ are real constants.  
\end{Def}

\begin{Prop}\label{t:rankonethe}
Let $(\Sigma_1,g_1)$ and $(\Sigma_2,g_2)$ be Lorentzian surfaces and $\Phi$ be a nondegenerate $\Omega^\epsilon$-Lagrangian surface of projected rank one. Then,

\noindent 1) $\Phi$ can be locally parametrised by $\Phi:S\rightarrow \Sigma_1\times\Sigma_2:(s,t)\mapsto (\phi(s),\psi(t))$, where $\phi$ and $\psi$ are respectively non-null regular curves in $\Sigma_1$ and $\Sigma_1$ with $s,t$ being their arclength parameters,

\noindent 2) the induced metric $\Phi^{\ast}G^{\epsilon}$ is flat,

\noindent 3) $\Phi$ is $G^{\epsilon}$-minimal if and only if the curves $\phi$ and $\psi$ are geodesics,

\noindent 4) $\Phi$ is $H$-minimal if and only if $\phi$ and $\psi$ are Cornu spirals of parameters $\lambda_{\phi}$ and $\lambda_{\psi}$, respectively, such that
\begin{equation}\label{e:condhamilrankonw}\epsilon_{\phi}\lambda_{\phi}+\epsilon\epsilon_{\psi}\lambda_{\psi}=0,\end{equation}
where $\epsilon_{\phi}=g_1(\phi',\phi')$ and $\epsilon_{\psi}=g_2(\psi',\psi')$. 
\end{Prop}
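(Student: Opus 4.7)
\medskip

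\noindent\textbf{Proof proposal.} The plan is to establish part (1) first (the product form of the parametrization), after which parts (2)--(4) reduce to direct computations using the nice connection formula $\nabla_X Y=(D^1_{X_1}Y_1,D^2_{X_2}Y_2)$ noted in Section~\ref{s:construction}.

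For part (1), assume without loss of generality that $\operatorname{rank}(\phi)=1$ at $p$. Since any 2-form pulled back by a rank-one map vanishes, $\phi^{\ast}\omega_1=0$, hence the Lagrangian condition $\phi^{\ast}\omega_1+\epsilon\psi^{\ast}\omega_2=0$ forces $\psi^{\ast}\omega_2=0$. As $\omega_2$ is a volume form on $\Sigma_2$ this forces $\operatorname{rank}(\psi)\le 1$, and the immersion hypothesis gives $\operatorname{rank}(\psi)=1$. The kernels $\ker d\phi$ and $\ker d\psi$ are thus two 1-dimensional distributions on $S$; they are transverse (otherwise $d\Phi$ would have rank $\le 1$) and trivially integrable, so the Frobenius theorem yields local coordinates $(s,t)$ with $\partial_t\in\ker d\phi$ and $\partial_s\in\ker d\psi$. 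Thus $\Phi(s,t)=(\phi(s),\psi(t))$. The nondegeneracy hypothesis on $\Phi^{\ast}G^{\epsilon}$ forces both $\phi'$ and $\psi'$ to be non-null, and we reparametrize by arclength so that $g_1(\phi',\phi')=\epsilon_\phi$ and $g_2(\psi',\psi')=\epsilon_\psi$.

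For part (2), the induced metric reads $\Phi^{\ast}G^{\epsilon}=\epsilon_\phi\,ds^2+\epsilon\epsilon_\psi\,dt^2$, a metric with constant coefficients, hence flat. For part (3), since $\nabla^S=0$ in these coordinates, the second fundamental form is simply $II(\partial_s,\partial_s)=(D^1_{\phi'}\phi',0)$, $II(\partial_t,\partial_t)=(0,D^2_{\psi'}\psi')$ and $II(\partial_s,\partial_t)=0$. Writing $D^1_{\phi'}\phi'=k_\phi\,j_1\phi'$ and $D^2_{\psi'}\psi'=k_\psi\,j_2\psi'$ (both acceleration vectors lie in the paracomplex-normal direction since $\omega_i(\cdot,\cdot)$ is alternating), the trace gives
\[
H=\epsilon_\phi k_\phi\,(j_1\phi',0)+\epsilon\epsilon_\psi k_\psi\,(0,j_2\psi'),
\]
which vanishes precisely when $k_\phi=k_\psi=0$, i.e.\ both curves are geodesics.

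For part (4), apply $J=j_1\oplus j_2$ and use $j_i^2=\mathrm{Id}$ to get
\[
JH=\epsilon_\phi k_\phi(s)\,\partial_s+\epsilon\epsilon_\psi k_\psi(t)\,\partial_t,
\]
a tangent vector field on $S$. Since the induced metric has constant coefficients, $\operatorname{div}(JH)=\partial_s(\epsilon_\phi k_\phi)+\partial_t(\epsilon\epsilon_\psi k_\psi)=\epsilon_\phi k_\phi'(s)+\epsilon\epsilon_\psi k_\psi'(t)$. $H$-minimality $\operatorname{div}(JH)=0$ forces each summand to be a constant $c$ of opposite sign, giving $k_\phi(s)=\lambda_\phi s+\mu_\phi$ and $k_\psi(t)=\lambda_\psi t+\mu_\psi$ with $\epsilon_\phi\lambda_\phi=-\epsilon\epsilon_\psi\lambda_\psi$, which is precisely (\ref{e:condhamilrankonw}).

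The only step with any real subtlety is part (1): one must use the Lagrangian condition to upgrade ``one projection has rank one'' to ``both projections have rank one,'' and then invoke Frobenius on the two transverse 1-dimensional kernels to arrive at the product parametrization. Everything afterwards is bookkeeping with the product connection and the paracomplex relations $j_i^2=\mathrm{Id}$, $g_i(j_i\cdot,\cdot)=\omega_i(\cdot,\cdot)$.
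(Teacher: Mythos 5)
Your proposal is correct and follows essentially the same route as the paper: deduce rank-one of the second projection from the Lagrangian condition and nondegeneracy of $\omega_2$, obtain the product parametrization, then compute the flat induced metric, the Fr\'enet-based second fundamental form, and $\operatorname{div}(JH^{\epsilon})=\epsilon_{\phi}k_{\phi}'(s)+\epsilon\epsilon_{\psi}k_{\psi}'(t)$ with separation of variables giving the Cornu-spiral condition. Your explicit Frobenius argument for the product coordinates merely fills in a step the paper leaves implicit, and the missing factor $\tfrac12$ in your formula for $H$ is only a normalization convention with no effect on the conclusions.
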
 
\begin{proof} Let $\Phi=(\phi,\psi):S\rightarrow\Sigma_1\times\Sigma_2$ be of  projected rank one Lagrangian immersion. Then either $\phi$ or $\psi$ is of rank one. Assume, without loss of generality, that $\phi$ is of rank one. The nondegeneracy of $\omega_2$ implies that $\psi$ is of rank one and thus $S$ is locally parametrised by $\Phi:U\subset S\rightarrow\Sigma_1\times\Sigma_2:(s,t)\mapsto (\phi (s),\psi (t))$, where $\phi$ and $\psi$ are regular curves in $\Sigma_1$ and $\Sigma_2$, respectively. We denote by $s,t$ the arclength parameters of $\phi$ and $\psi$, respectively, such that $g_1(\phi',\phi')=\epsilon_{\phi}$ and $g_s(\psi',\psi')=\epsilon_{\psi}$, where $\epsilon_{\phi},\epsilon_{\psi}\in\{-1,1\}$.
The Fr\'enet equations give
\[
D^1_{\phi'}\phi'=k_{\phi}j_1\phi'\qquad D^2_{\psi'}\psi'=k_{\psi}j_2\psi',
\]
where $k_{\phi}$ and $k_{\psi}$ are the curvatures of $\phi$ and $\psi$, respectively. Moreover, $\Phi_s=(\phi',0)$ and $\Phi_t=(0,\psi')$ and thus
\[
\nabla_{\Phi_s}\Phi_s=(k_{\phi}j_1\phi',0),\qquad \nabla_{\Phi_t}\Phi_t=(0,k_{\psi}j_2\psi'),\qquad \nabla_{\Phi_t}\Phi_s=(0,0).
\] 
The immersion $\Phi$ is flat since the first fundamental form $G^{\epsilon}_{ij}=G^{\epsilon}(\partial_i\Phi,\partial_j\Phi)$ is given by $G_{ss}=\epsilon_{\phi},\; G_{tt}=\epsilon\epsilon_{\psi}$, and $G_{st}=0$. The second fundamental form $h^{\epsilon}$ of $\Phi$ is completely determined by the following tri-symmetric tensor 
\[
h^{\epsilon}(X,Y,Z):=G^{\epsilon}(h^{\epsilon}(X,Y),JZ)=\Omega^{\epsilon}(X,\nabla_Y Z).
\]
We then have $h^{\epsilon}_{sst}=h^{\epsilon}_{stt}=0,\; h_{sss}^{\epsilon}=-\epsilon_{\phi}k_{\phi} $ and $h_{ttt}^{\epsilon}=-\epsilon\epsilon_{\psi} k_{\psi}$. Denoting by $H^{\epsilon}$ the mean curvature of $\Phi$ we have
\begin{equation}\label{e:meancurvlll}
2H^{\epsilon}=\epsilon_{\phi}k_{\phi}J\Phi_s+\epsilon \epsilon_{\psi}k_{\psi}J\Phi_t,
\end{equation}
and we can see easily that the Lagrangian immersion $\Phi$ is $G^{\epsilon}$-minimal if and only if the curves $\phi$ and $\psi$ are geodesics. Moreover, if $\Phi$ is a $G^{\epsilon}$-minimal Lagrangian it is totally geodesic, since the second fundamental form vanishes identically. 

The condition (\ref{e:condhamilrankonw}) for Hamiltonian $G^\epsilon$-minimal Lagrangian surfaces of projected rank one, is given by the fact that
\[
\mbox{div}^{\epsilon}(2JH^{\epsilon})=\epsilon_{\phi}\frac{D}{ds}k_{\phi}(s)+\epsilon\epsilon_{\psi} \frac{D}{dt}k_{\psi}(t),\]
and the Proposition follows.
\end{proof}

We now prove our next theorem:

\vspace{0.1in}

{\indent\sc Proof of Theorem \ref{t:notflattt}:}
Let $\Phi=(\phi,\psi):S\rightarrow \Sigma_1\times\Sigma_2$ be a $\Omega^{\epsilon}$-Lagrangian immersion of  projected rank two. Then the  mappings $\phi:S\rightarrow \Sigma_1$ and $\psi:S\rightarrow \Sigma_2$ are both local diffeomorphisms. The Lagrangian condition yields 
\begin{equation}\label{e:lagpositive}
\phi^{\ast}\omega_1=-\epsilon\psi^{\ast}\omega_2.
\end{equation}
Without loss of generality we consider an orthonormal frame $(e_1,e_2)$ of $\Phi^{\ast}G^{\epsilon}$ such that,
\[
G^{\epsilon}(d\Phi(e_1),d\Phi(e_1))=\epsilon_1 G^{\epsilon}(d\Phi(e_2),d\Phi(e_2))=1,\qquad G^{\epsilon}(d\Phi(e_1),d\Phi(e_2))=0.
\]
Note that for $\epsilon_1=1$ the induced metric $\Phi^{\ast}G^{\epsilon}$ is Riemannian while for $\epsilon_1=-1$ the induced metric $\Phi^{\ast}G^{\epsilon}$ is Lorentzian.

Let $(s_1,s_2)$ and $(v_1,v_2)$ be oriented orthonormal frames of $(\Sigma_1,g_1)$ and $(\Sigma_2,g_2)$, respectively, such that $|s_1|^2=-|s_2|^2=1$ and $|v_1|^2=-|v_2|^2=1$ with $j_1s_1=s_2$ and $j_2v_1=v_2$. Then there exist smooth functions $\lambda_1,\lambda_2,\mu_1,\mu_2$ on $\Sigma_1$ and $\bar\lambda_1,\bar\lambda_2,\bar\mu_1,\bar\mu_2$ on $\Sigma_2$ such that
\[
d\phi(e_1)=\lambda_1 s_1+\lambda_2 s_2,\;\; d\phi(e_2)=\mu_1 s_1+\mu_2 s_2,
\;\;
d\psi(e_1)=\bar\lambda_1 v_1+\bar\lambda_2 v_2,\;\; d\psi(e_2)=\bar\mu_1 v_1+\bar\mu_2 v_2.
\]
Using the Lagrangian condition (\ref{e:lagpositive}), we have
\[
(\lambda_1\mu_2-\lambda_2\mu_1)(\phi(p))=-\epsilon(\bar\lambda_1\bar\mu_2-\bar\lambda_2\bar\mu_1)(\psi(p)),\quad\forall\;\; p\in S.
\]
Moreover, the assumption that $\Phi$ is of projected rank two, implies that $\lambda_1\mu_2-\lambda_2\mu_1\neq 0$ for every $p\in S$.

For the mean curvature vector $H^{\epsilon}$  of the immersion $\Phi$, consider the one form $a_{H^{\epsilon}}$ defined by $a_{H^{\epsilon}}=G^{\epsilon}(JH^{\epsilon},\cdot)$. Since $\Phi$ is a Lagrangian in a para-K\"ahler 4-manifold
\[
da_{H^{\epsilon}}=-\Phi^{\ast}\rho^{\epsilon},
\] 
where $\rho^{\epsilon}$ is the Ricci form of $G^{\epsilon}$. The fact that $\Phi$ has parallel mean curvature vector implies that the one form $a_{H^{\epsilon}}$ is closed and thus,
\[
(\mu_1\lambda_2-\mu_2\lambda_1)\Big[\Big(\lambda_1^2-\lambda_2^2+\epsilon_1(\mu_1^2-\mu_2^2)\Big)\kappa_1-
\Big(\bar\lambda_1^2-\bar\lambda_2^2+\epsilon_1(\bar\mu_1^2-\bar\mu_2^2)\Big)\kappa_2\Big)\Big]=0.
\]
Hence,
\begin{equation}\label{e:consd}
\Big(\lambda_1^2-\lambda_2^2+\epsilon_1(\mu_1^2-\mu_2^2)\Big)\kappa_1=
\Big(\bar\lambda_1^2-\bar\lambda_2^2+\epsilon_1(\bar\mu_1^2-\bar\mu_2^2)\Big)\kappa_2.
\end{equation}
Following the same argument with the proof of Theorem 3.5 in \cite{Ge1}, we show that
\begin{equation}\label{e:niequat}
\lambda_1^2-\lambda_2^2+\epsilon_1(\mu_1^2-\mu_2^2)=1, \quad\mbox{and}\quad \bar\lambda_1^2-\bar\lambda_2^2+\epsilon_1(\bar\mu_1^2-\bar\mu_2^2)=\epsilon,
\end{equation}
and the relation (\ref{e:consd}) becomes
\[
\kappa_1(\phi(p))=\epsilon \kappa_2(\psi(p)),\qquad \mbox{for}\;\mbox{every}\;\; p\in S,
\]
which implies that the metrics $g_1$ and $g_2$ can satisfy neither condition (i) nor condition (ii) of the statement and the theorem follows.
$\Box$

\vspace{0.1in}

We immediately obtain the following corollary:

\begin{Cor}\label{c:coriiwu}
Let $(\Sigma,g)$ be a non-flat Lorentzian two manifold. Then every $G^{-}$-minimal Lagrangian surface immersed in $\Sigma\times\Sigma$ is of projected rank one and consequently the product of two geodesics of $(\Sigma,g)$.
\end{Cor}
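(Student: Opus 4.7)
The plan is to deduce this corollary almost entirely from Theorem \ref{t:notflattt} combined with Proposition \ref{t:rankonethe}(3). The key observation is that minimality automatically entails parallel mean curvature vector (since $H^{\epsilon} \equiv 0$), so Theorem \ref{t:notflattt} is immediately applicable provided the curvature hypothesis holds.

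First I would verify that the setting $(\Sigma_1, g_1) = (\Sigma_2, g_2) = (\Sigma, g)$ with $\epsilon = -1$ places us in case (i) of Theorem \ref{t:notflattt}. With $\kappa_1 = \kappa_2 = \kappa$ and $\epsilon = -1$, the product condition $\epsilon \kappa_1 \kappa_2 < 0$ reduces to $-\kappa^2 < 0$, which holds precisely at non-flat points. The assumption that $(\Sigma, g)$ is non-flat—interpreted as $\kappa \not\equiv 0$, hence $\kappa \neq 0$ on an open dense subset—guarantees that both factors are non-flat almost everywhere and that $\epsilon \kappa_1 \kappa_2 < 0$ away from the (nowhere dense) flat locus. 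So condition (i) is genuinely satisfied.

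Applying Theorem \ref{t:notflattt} then forces any $G^{-}$-minimal (and hence parallel mean curvature) Lagrangian immersion $\Phi = (\phi, \psi)$ to be locally of the form $\gamma_1 \times \gamma_2$, where $\gamma_i \subset \Sigma$ are curves of constant curvature. This local product structure is exactly the assertion that $\Phi$ has projected rank one in the sense of Definition \ref{d:defiprojectedrank}. To conclude, I would invoke Proposition \ref{t:rankonethe}(3), which characterizes projected rank one $G^{\epsilon}$-minimal Lagrangians as products of geodesics; applying it to our $\Phi$ upgrades the curves $\gamma_1, \gamma_2$ from having merely constant curvature to being geodesics of $(\Sigma, g)$.

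No genuine obstacle arises: the entire argument is a bookkeeping exercise once the diagonal case $\Sigma_1 = \Sigma_2$, $\epsilon = -1$ is recognized as satisfying Theorem \ref{t:notflattt}(i). The only mildly subtle point is the interpretation of \emph{non-flat}, which must be strong enough that the hypothesis ``non-flat almost everywhere'' of Theorem \ref{t:notflattt} holds; this is automatic under the natural reading that $\kappa$ is not identically zero, since the zero set of a non-trivial smooth (or real-analytic) curvature function has empty interior in generic situations, and in any event the conclusion is local and can be stated on the open set where $\kappa \neq 0$.
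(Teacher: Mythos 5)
Your route is the one the paper intends (it states the corollary as an immediate consequence of Theorem \ref{t:notflattt}: minimality gives $H^{\epsilon}=0$, hence trivially parallel mean curvature, and Proposition \ref{t:rankonethe}(3) then upgrades the constant-curvature curves to geodesics), but your verification of hypothesis (i) contains a genuine error. The quantity $\epsilon\kappa_1\kappa_2$ in condition (i) is a function on the \emph{product}: it couples the curvature of the first factor at a point $x\in\Sigma_1$ with that of the second factor at an independent point $y\in\Sigma_2$, and in the proof of Theorem \ref{t:notflattt} it is evaluated at the pair $(\phi(p),\psi(p))$, where $\phi(p)$ and $\psi(p)$ are in general \emph{different} points of $\Sigma$. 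With $\kappa_1=\kappa_2=\kappa$ and $\epsilon=-1$ the condition therefore reads $\kappa(x)\kappa(y)>0$ for all non-flat $x,y$, which is not the tautology $-\kappa^2<0$: it is the nontrivial requirement that $\kappa$ have a fixed sign on the non-flat locus. Your reduction ``$\epsilon\kappa_1\kappa_2=-\kappa^2$'' silently evaluates both curvatures at the same point, i.e., it checks condition (i) only along the diagonal of $\Sigma\times\Sigma$.

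This matters beyond bookkeeping. The obstruction extracted in the proof of Theorem \ref{t:notflattt} for a projected rank two immersion is $\kappa(\phi(p))=\epsilon\kappa(\psi(p))=-\kappa(\psi(p))$, and under your weak reading of non-flat ($\kappa\not\equiv 0$ but possibly sign-changing) this can be satisfied by letting $\phi$ and $\psi$ map into regions of opposite curvature sign: locally, pairing a region of curvature $+1$ with one of curvature $-1$ and applying the anti-isometry $g\mapsto -g$ to the second factor turns $(\Sigma\times\Sigma,G^-)$ into $({\mbox d}{\mathbb S}^2\times {\mbox d}{\mathbb S}^2,G^+)$, where rank two minimal Lagrangians (graphs of suitable anti-symplectic local isometries) do exist, so no contradiction is available. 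Your closing remark that one may localize to the open set $\{\kappa\neq 0\}$ does not repair this: that set need not be connected, and $\phi(p)$, $\psi(p)$ may lie in components where $\kappa$ has opposite signs. The fix is to read ``non-flat'' in the strong sense the theorem forces --- $\kappa$ nowhere vanishing (hence of constant sign on the connected surface $\Sigma$), or at least of constant sign away from a negligible flat locus --- after which $\epsilon\kappa_1(x)\kappa_2(y)=-\kappa(x)\kappa(y)<0$ holds for all relevant pairs, condition (i) is genuinely met, and the remainder of your argument (Theorem \ref{t:notflattt} giving projected rank one, then Proposition \ref{t:rankonethe}(3) giving geodesics) goes through exactly as in the paper.
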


\begin{example}Consider the real space ${\mathbb R}^3$, endowed with the pseudo-Riemannian metric $\left<.,.\right>_p=-\sum_{i=1}^p dx_i^2+\sum_{i=p+1}^3 dx_i^2$. We define the \emph{de Sitter 2-space}  ${\mbox d}{\mathbb S}_a^2$ and the \emph{anti de Sitter 2-space} ${\mbox Ad}{\mathbb S}^2_a$ of radius $a>0$, by ${\mbox d}{\mathbb S}_a^2:=\{x\in {\mathbb R}^3|\; \left<x,x\right>_1=a^2\}$ and ${\mbox Ad}{\mathbb S}^2_a:=\{x\in {\mathbb R}^3|\; \left<x,x\right>_2=a^2\}$. Note that ${\mbox d}{\mathbb S}_a^2$ and ${\mbox Ad}{\mathbb S}^2_a$ are anti-isometric and therefore we only use  the De Sitter 2-space ${\mbox d}{\mathbb S}_a^2$. Moreover, the Gauss curvature is constant with $\kappa({\mbox d}{\mathbb S}_a^2)=a^{-1}$. For positives $a,b$ with $a\neq b$, the Theorem \ref{t:notflattt} implies that every $\Omega^\epsilon$-Lagrangian surface with parallel mean curvature in ${\mbox d}{\mathbb S}_a^2\times {\mbox d}{\mathbb S}_b^2$ is locally the product of geodesics $\gamma_1\times\gamma_2$. \end{example}

\begin{example}\label{e:gaussmaporgeodesiccongruence} The space $L^{-}({\mbox Ad}{\mathbb S}^3)$ of oriented timelike geodesics in the anti-De Sitter 3-space ${\mbox Ad}{\mathbb S}^3$ is diffeomorphic to the product ${\mbox d}{\mathbb S}^2\times {\mbox d}{\mathbb S}^2$. The para-K\"ahler metric $G^\epsilon$ is invariant under the natural action of the isometry group ${\mbox Iso}({\mbox Ad}{\mathbb S}^3,g)$ (see \cite{AGK} and \cite{An4}). It is clear by Corollary \ref{c:coriiwu} that every $G^{-}$-minimal Lagrangian surface immersed in $L^{-}({\mbox Ad}{\mathbb S}^3)$ is locally the product of two geodesics in ${\mbox d}{\mathbb S}^2$. \end{example}

\vspace{0.1in}

The set of para-complex numbers ${\mathbb D}$ is defined to be the two-dimensional real vector space ${\mathbb R}^2$ endowed with the commutative algebra structure whose product rule is 
\[
(x_1,y_1)\cdot (x_2,y_2)=(x_1x_2+y_1y_2,x_1y_2+x_2y_1).
\]

The number $(0,1)$, whose square is $(1,0)$, will be denoted by $\tau.$
It is convenient to use the following notation: $(x,y) \simeq z =x + \tau y .$
In particular, one has the same conjugation operator than in ${\mathbb C}$, i.e., $\overline{x+ \tau y } = x - \tau y$ with corresponding square norm $| z |^2 :=  z . \bar{z} = x^2 -y^2$.
In other words, the metric associated to  $|.|^2$ is the Minkowski metric $dx^2 - dy^2.$

On the Cartesian product ${\mathbb D}^2$ with para-complex coordinates $(z_1,z_2)$, we define the canonical  para-K\"ahler structure $(J,G^\epsilon)$ by
\[
J(z_1,z_2):=(\tau z_1,\tau z_2),
\qquad
G^\epsilon:=dz_1  d\bar{z}_1+\epsilon dz_2  d\bar{z}_2.
\]

We now give a similar result of Dong \cite{Dong} for Lagrangian graphs in ${\mathbb C}^n$, to show that there are $G^-$-minimal Lagrangian surface of rank two in $\Sigma\times\Sigma$ when $(\Sigma,g)$ is a flat Lorentzian surface. 

\begin{Prop}\label{p:prokk}
Let $U$ be some open subset of ${\mathbb R}^2$, $u$ a smooth, real-valued function defined on $U$ and $\Phi$ be the graph of the gradient i.e. the immersion $\Phi:U\rightarrow {\mathbb D}^2:x\mapsto x+\tau\nabla u(x)$, where $\nabla u$ is the gradient of $u$ with respect to the Minkowski metric $g$ of ${\mathbb R}^2$. Then $\Phi$ is $G^-$-minimal Lagrangian if and only if $u$ is a harmonic function with respect to the metric $g$.
\end{Prop}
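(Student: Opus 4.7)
The plan is a direct calculation in three steps.

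First, I would verify the Lagrangian condition directly. Writing $\Phi(\xi_1,\xi_2) = (\xi_1, u_{\xi_1}, \xi_2, -u_{\xi_2})$ in the coordinates $(x_1,y_1,x_2,y_2)$ of ${\mathbb D}^2$ (the minus sign in the second ${\mathbb D}$-factor comes from raising the index of $du$ with the Minkowski metric) and pulling back $\Omega^- = -dx_1\wedge dy_1 + dx_2\wedge dy_2$, the two contributions proportional to $u_{\xi_1\xi_2}\,d\xi_1\wedge d\xi_2$ cancel by the symmetry of mixed partials. Hence $\Phi^*\Omega^- \equiv 0$ for any smooth $u$, so $\Phi$ is automatically $\Omega^-$-Lagrangian.

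Second, I would compute the mean curvature. Since $({\mathbb D}^2,G^-)$ is flat, the Levi-Civita connection reduces to ordinary partial differentiation, and evaluating the trisymmetric tensor against the normal frame $J\Phi_{\xi_k}$ gives (after a short calculation using the signature $(+,-,-,+)$ of $G^-$)
\[
h(\Phi_{\xi_i},\Phi_{\xi_j},\Phi_{\xi_k}) = G^-(\Phi_{\xi_i\xi_j}, J\Phi_{\xi_k}) = -u_{\xi_i\xi_j\xi_k}.
\]
Moreover, since $({\mathbb D}^2,G^-)$ is Ricci-flat, the mean curvature one-form $\alpha_H := G^-(JH,\cdot)$ is closed on $\Phi(U)$ and locally equals $d\theta$ for a Lagrangian angle function $\theta$; thus $\Phi$ is $G^-$-minimal if and only if $d\theta=0$.

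Third, I would identify $\theta$ via the para-complex Jacobian determinant. Viewed as a $2\times 2$ matrix over ${\mathbb D}$, the differential of $\Phi$ equals $d\Phi = I + \tau\, g^{-1}\mathrm{Hess}(u)$, and using $\tau^2=1$ one finds
\[
\det(d\Phi) = \bigl(1 + u_{\xi_1\xi_2}^2 - u_{\xi_1\xi_1}u_{\xi_2\xi_2}\bigr) + \tau\,\bigl(u_{\xi_1\xi_1}-u_{\xi_2\xi_2}\bigr),
\]
whose para-imaginary part is precisely the Minkowski Laplacian $\Delta u$. Taking $\theta$ to be the para-argument of $\det(d\Phi)$ (the para-analogue of $\operatorname{Im}\log$, given by $\mathrm{arctanh}$ of the ratio of the para-imaginary and para-real parts), one checks that $d\theta$ is a nowhere-vanishing multiple of $d\Delta u$. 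Consequently $d\theta = 0$ is equivalent to $\Delta u \equiv 0$, which is the desired harmonicity condition.

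The main obstacle is the third step: the para-complex argument is only well-defined on the "spacelike" region $|1+u_{\xi_1\xi_2}^2-u_{\xi_1\xi_1}u_{\xi_2\xi_2}| > |\Delta u|$, where the induced metric $\Phi^*G^-$ is nondegenerate, so one has to treat the null locus carefully and fix the additive normalization of $\theta$ so that constancy of $\theta$ reduces to the vanishing of $\Delta u$ rather than merely its being a constant.
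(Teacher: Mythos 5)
Your argument is essentially the paper's own proof: the paper likewise verifies the Lagrangian condition by computing $\Omega^{-}(\Phi_{x_1},\Phi_{x_2})=0$ directly, and then reads off minimality from the Lagrangian angle $\beta=\mathrm{arg}\bigl(1-u_{x_1x_1}u_{x_2x_2}+u_{x_1x_2}^2+\tau(u_{x_1x_1}-u_{x_2x_2})\bigr)$, which is exactly the para-argument of your ${\mathbb D}$-valued $\det(d\Phi)$. The normalization subtlety you flag at the end (constancy of $\theta$ versus vanishing of $\Delta u$ --- note your intermediate claim that $d\theta$ is a multiple of $d\Delta u$ is not literally correct, since $d\theta=(a\,db-b\,da)/(a^2-b^2)$) is passed over silently in the paper, which concludes $u_{x_1x_1}-u_{x_2x_2}=0$ straight from the angle formula, so your attempt is if anything more explicit about this point.
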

\begin{proof}
If $x=(x_1,x_2)$, the first derivatives of the immersion $\Phi(x_1,x_2)=(x_1+\tau u_{x_1},x_2-\tau u_{x_2})$ are
\[
\Phi_{x_1}=(1+\tau u_{x_1x_1},-\tau u_{x_1x_2}),\qquad \Phi_{x_2}=(\tau u_{x_1x_2},1-\tau u_{x_2x_2}).
\]
The symplectic structure is $\Omega^{-}(.,.)=G^{-}(J.,.)$. Then $\Omega^{-}(\Phi_{x_1},\Phi_{x_2})=0$ and therefore $\Phi$ is a Lagrangian immersion. The Lagrangian angle is
\begin{eqnarray}
\beta&=&{\mbox arg}(1-u_{x_1x_1}u_{x_2x_2}+u_{x_1x_2}^2+\tau(u_{x_1x_1}-u_{x_2x_2})).\nonumber
\end{eqnarray}
Then $\Phi$ is $G^{-}$-minimal if and only if $u_{x_1x_1}-u_{x_2x_2}=0$ and the proposition follows. 
\end{proof}

\noindent
The immersion $\Phi$ in the Proposition \ref{p:prokk} is of rank two at the open subset $\{(x_1,x_2)\in U|\; u_{x_1x_2}\neq 0\}$.

\vspace{0.1in}

We denote by $d{\mathbb S}^2$ the De Sitter 2-space of radius one. Then we prove our next result:

\vspace{0.1in}

{\indent\sc Proof of Theorem \ref{t:desitterw}:}
If ${\mathbb R}^{1,2}$ denotes the Lorentzian space $({\mathbb R}^3,\left<,\right>_1)$, we define the Lorentzian cross product $\otimes$ in ${\mathbb R}^{1,2}$ by
\[
 u\otimes v:={\mbox I}_{1,2}\cdot (u\times v), 
\]
where $u\times v$ is the standard cross product in ${\mathbb R}^3$ and ${\mbox I}_{1,2}={\mbox diag}(-1,1,1)$. For $u,v,w\in {\mathbb R}^3$ we have
\begin{equation}\label{e:lorcrosspro}
\left<u\otimes v,u\otimes w\right>_1=-\left<u,u\right>_1\left<v,w\right>_1+\left<u,v\right>_1\left<u,w\right>_1.
\end{equation}
The paracomplex structure $j$ on ${\mbox d}{\mathbb S}^2$ is given by $j_x(v):=x\otimes v$, where $v\in {\mathbb R}^3$ is such that $\left<x,v\right>_1=0$. It can be verified easily that $x\otimes (x\otimes v)=v$.

If $h$ denotes the second fundamental form of the inclusion map $i:{\mbox d}{\mathbb S}^2\hookrightarrow {\mathbb R}^3$ and $u,v\in {\mbox T}_x{\mbox d}{\mathbb S}^2$, we have $h_x(u,v)=-\left<u,v\right>_1x$. We consider the para-K\"ahler structure $(G^+,\Omega^+,J)$ of the product ${\mbox d}{\mathbb S}^2\times {\mbox d}{\mathbb S}^2$ and denote by $\tilde{h}$ the second fundamental form of ${\mbox d}{\mathbb S}^2\times {\mbox d}{\mathbb S}^2$ into ${\mathbb R}^3\times {\mathbb R}^3$. Thus,
\[
\tilde{h}_{(x,y)}(U,V)=(-\left<u_1,v_1\right>_1x,-\left<u_2,v_2\right>_1y),
\]
where $U=(u_1,u_2), V=(v_1,v_2)\in {\mbox T}_{(x,y)}({\mbox d}{\mathbb S}^2\times {\mbox d}{\mathbb S}^2)$. From (\ref{e:lorcrosspro}) we obtain
\begin{equation}\label{e:lorcrosspro1}
\tilde{h}_{(x,y)}(JU,JV)=-\tilde{h}_{(x,y)}(U,V).
\end{equation}
For an orthonormal frame $(v_1,v_2)$ of ${\mbox d}{\mathbb S}^2$, oriented such that $|v_1|^2=-|v_2|^2=1$, we consider the following oriented orthonormal frame $(E_1,E_2=JE_1,E_3,E_4=JE_3)$ of $G^+$ defined by
\[
E_1=(v_1,v_1+v_2),\qquad E_3=(-v_1+v_2,v_1),
\]
and we prove that the mean curvature $\tilde{H}$ of the inclusion map of ${\mbox d}{\mathbb S}^2\times {\mbox d}{\mathbb S}^2$ into ${\mathbb R}^3\times {\mathbb R}^3$ is
\begin{equation}\label{e:lorcrosspro2}
2\tilde{H}_{(x,y)}=-(x,y).
\end{equation}
Let $\Phi=(\phi,\psi):S\rightarrow {\mbox d}{\mathbb S}^2\times {\mbox d}{\mathbb S}^2$ be a $\Omega^+$- Lagrangian immersion with non null parallel mean curvature vector $H$. Following similar arguments with Theorem 1 of \cite{CU}, consider an orthonormal frame $(e_1,e_2)$ with respect to the induced metric such that $|e_1|^2=\epsilon_1|e_2|^2=1$. Then the equation (\ref{e:niequat}) becomes,
\begin{equation}\label{e:niequat1}
|d\phi(e_1)|^2+\epsilon_1 |d\phi(e_2)|^2=|d\psi(e_1)|^2+\epsilon_1 |d\psi(e_2)|^2=1.
\end{equation}
By the proof of Theorem \ref{t:notflattt}, we have
\begin{equation}\label{e:hsgrwu}
(|d\phi e_1|^2-\epsilon_1|d\phi e_2|^2)^2+4\epsilon_1g_1(d\phi e_1,d\phi e_2)^2=1+4\epsilon_1 C^2,
\end{equation}
where $C:=\lambda_2\mu_1-\lambda_1\mu_2=\bar\lambda_1\bar\mu_2-\bar\lambda_2\bar\mu_1$ and is called \emph{the associated Jacobian} of the Lagrangian immersion $\Phi$ (see \cite{CU} and \cite{urbano2} for definitions and further details). Note that the vanishing of the associated Jacobian is equivalent with the fact that the Lagrangian immersion $\Phi$ is of projected rank one. 

From (\ref{e:lorcrosspro1}) and  (\ref{e:lorcrosspro2}), the mean curvature vector $\bar H$ of $S$ in ${\mathbb R}^3\times {\mathbb R}^3$ is
\begin{equation}\label{e:fundatla}
\bar H= H-\frac{1}{2}\Phi.
\end{equation}
Since $\nabla ^{\bot}H=0$ together with the Lagrangian condition, implies that $\nabla JH=0$, where $\nabla ^{\bot}$ and $\nabla$ denote the normal and tangential part of the Levi-Civita connection of $G^+$. From Theorem \ref{t:einsteinandconformlly}, we know that $G^+$ is Einstein and so there exists locally a function $\beta$ on $S$ such that $JH=\nabla\beta$. Thus, using the Boschner formula,
\[
\frac{1}{2}\Delta|\nabla\beta|^2={\mbox Ric}(\nabla\beta,\nabla\beta)+g(\nabla\beta,\nabla\Delta\beta)+g(\nabla^2\beta,\nabla^2\beta),
\]
we have that the induced metric $g$ is flat. Furthermore, the normal curvature of $\Phi$ vanishes. It is important to mention that the Boschner formula holds also for pseudo-Riemannian metrics \cite{AnGer}.

Let $(x,y)$ be isothermal local coordinates of $g$, i.e., $g=e^{2u}(dx^2+\epsilon_1 dy^2)$ and let $z=x+iy$. Note that for $\epsilon_1=1$, the variable $z$ is a local holomorphic coordinate, while for $\epsilon_1=-1$, the variable $z$ is a local paraholomorphic coordinate. A brief computation gives, 
\[
G^+(\Phi_z,\Phi_z)=g_1(\phi_z,\phi_z)+g_2(\psi_z,\psi_z)=0
\]
From (\ref{e:fundatla}) we have that $\Phi_{z\bar z}=e^{2u}\Big(H-\Phi/2\Big)/2$, and
\[
\Phi_{zz}=2u_z\Phi_z-G^+(H,J\Phi_z)J\Phi_z-2G^+(\Phi_{zz},J\Phi_z)J\Phi_{\bar z}-\frac{1}{2}G^+(\Phi_z,\hat{\Phi}_z)\hat{\Phi},
\]
where $\hat{\Phi}=(\phi,-\psi)$. A direct computation implies,
\[
G^+(J\Phi_{\bar z},\hat{\Phi}_z)=i\epsilon_1 e^{2u}C,
\]
and thus, $\hat{\Phi}_z=2G^+(\Phi_z,\hat{\Phi}_z)\Phi_{\bar z}-2i\epsilon_1 CJ\Phi_z.$ From (\ref{e:hsgrwu}), it follows that
\begin{equation}\label{e:protosimantiko1}
|G^+(\Phi_z,\hat{\Phi}_z)|^2=4|g_1(\phi_z,\phi_z)|^2=4|g_2(\psi_z,\psi_z)|^2=\frac{1+4\epsilon_1 C^2}{4}.
\end{equation}
It is not hard for one to obtain,
\begin{equation}\label{e:protosimantiko2}
|G^+(H,J\Phi_z)|^2=-\frac{|H|^2}{4}.
\end{equation}
Since the normal curvature of $\Phi$ vanishes, the Ricci equation yields,
\begin{equation}\label{e:protosimantiko3}
|G^+(\Phi_{zz},J\Phi_z)|^2=-\frac{e^{6u}(|H|^2+4C^2)}{16}.
\end{equation}
We now use the Gauss equation to get $C^2=2e^{-4u}(|\Phi_{zz}|^2-|\Phi_{z\bar z}|^2)$,
which implies,
\[
\frac{C^2e^{4u}}{2}=
2|u_z|^2e^{2u}-
\frac{e^{2u}|G^+(H,J\Phi_z)|^2}{2}
-2e^{-2u}|G^+(\Phi_{zz},J\Phi_z)|^2
\]
\[
\qquad\qquad\qquad\qquad\qquad\qquad\qquad\qquad\qquad +\frac{|G^+(\Phi_z,\hat{\Phi}_z)|^2}{2}-\frac{e^{4u} (|H|^2+1/2)}{4},
\]
and from (\ref{e:protosimantiko1}), (\ref{e:protosimantiko2}) and (\ref{e:protosimantiko3}) we finally obtain $C^2=-4\epsilon_1|u_z|^2e^{-2u}$.
Since $g$ is flat, it is possible to choose local coordinates $(x,y)$ such that $g=dx^2+\epsilon_1 dy^2$, that is, the function $u$ is constant and therefore $C=0$. Then $\Phi$ is of projected rank one and in particular, it is locally a product of curves with constant curvature such that they cannot be both geodesics. 
$\Box$

\vspace{0.2in}

\section{($H$-) Stability of ($H$-) minimal Lagrangian surfaces}\label{s:hamiltstabilitysection}

For the stability of $G^\epsilon$-minimal Lagrangian surfaces in the para-K\"ahler structure $(\Sigma_1\times\Sigma_2,G^\epsilon,J,\Omega^\epsilon)$, we prove the following theorem:

\vspace{0.1in}

{\indent\sc Proof of Theorem \ref{t:stabilityofminimal}:}
Let $\Phi:S\rightarrow\Sigma_1\times\Sigma_2$ be a $G^{\epsilon}$-minimal Lagrangian surface. By assumption, $\Phi$ is of projected rank one and therefore it is locally the product $\gamma_1\times\gamma_2$ of non-null geodesics parametrised by $\Phi(s,t)=(\phi(s),\psi(t))$. If $(S_t)$ is a normal variation of $S$ with velocity $X\in{\mbox N}S$, the second variation formula is,
\[
\delta^2 V(S)(X)=\int_{S} \Big(G^{\epsilon}(\nabla^{\bot}X,\nabla^{\bot}X)-G^{\epsilon}(A_X,A_X)+G^{\epsilon}(R^{\bot}(X),X)\Big)dV,
\]
where, $A_X$ is the shape operator and $R^{\bot}(X):={\mbox Tr}\Big((Y_1,Y_2)\mapsto R(Y_1,X) Y_2)\Big)$.

For $X=X^1J\Phi_s+X^2J\Phi_t$, we have, $$G^{\epsilon}(\nabla^{\bot}X,\nabla^{\bot}X)= -(X^1_s)^2-(X^2_t)^2-\epsilon\epsilon_{\phi}\epsilon_{\psi}\big((X^2_s)^2+(X^1_t)^2\big)$$.
Furthermore, $G^{\epsilon}(A_X,A_X)=0$ and a brief computation gives,
\[
G^{\epsilon}(R^{\bot}(X),X)=\epsilon_{\phi}(X^1)^2\kappa(g_1)+\epsilon_{\psi}(X^2)^2\kappa(g_2).
\]
The metric $G^{\epsilon}$ is of neutral signature and therefore a necessary condition for a minimal surface to be stable is that the induced metric $\Phi^{\ast}G^\epsilon$ must be Riemannian \cite{An2}. This implies that $\epsilon_{\phi}=\epsilon\epsilon_{\psi}$ and hence the second variation formula becomes
\[
\delta^2 V(S)(X)=\int_{S}
-(X^1_s)^2-(X^2_s)^2-(X^1_t)^2-(X^2_t)^2+\epsilon_{\phi}(X^1)^2\kappa(g_1)+\epsilon_{\psi}(X^2)^2\kappa(g_2),
\]
and the theorem follows.
$\Box$

\begin{example}We give the following examples of $G^\epsilon$-minimal Lagrangian surfaces that are stable:

\bigskip

 \hspace{1em}  \begin{tabular}{| l || l ||  }
     \hline
     Product &  Type of $\gamma_1\times\gamma_2$  \\ \hline \hline
   $({\mbox  d}{\mathbb S}_a^2\times {\mbox  d}{\mathbb S}_b^2 ,G^+)$ & $\gamma_1\subset {\mbox  d}{\mathbb S}_a^2,\gamma_2\subset {\mbox  d}{\mathbb S}_b^2$ both timelike \\ \hline 
  $({\mbox  d}{\mathbb S}_a^2\times {\mbox  Ad}{\mathbb S}_b^2,G^-)$ & $\gamma_1\subset {\mbox  d}{\mathbb S}_a^2$ timelike and $\gamma_2\subset {\mbox  Ad}{\mathbb S}_b^2$ spacelike  \\  \hline 
  $ ({\mbox  d}{\mathbb S}_a^2\times {\mbox  Ad}{\mathbb S}_a^2,G^-)$ & $\gamma_1\subset {\mbox d}{\mathbb S}_a^2$ timelike and $\gamma_2\subset {\mbox  Ad}{\mathbb S}_a^2$ spacelike  \\ \hline 
   $({\mathbb D}\times {\mbox d}{\mathbb S}_b^2,G^+)$&  $\gamma_1\subset {\mathbb D},\gamma_2\subset {\mbox d}{\mathbb S}_b^2$ both timelike \\ \hline 
 $({\mathbb D}\times {\mbox d}{\mathbb S}_b^2,G^-)$ &  $\gamma_1\subset {\mathbb D}$ spacelike and $\gamma_2\subset {\mbox d}{\mathbb S}_b^2$ timelike  \\ \hline 
\end{tabular}
\end{example}

\bigskip

We now study the $H$-stability of a $H$-minimal surface in the para-K\"ahler structure $(\Sigma_1\times\Sigma_2,G^\epsilon,\Omega^\epsilon,J)$. We recall that the $H$-stability of a $H$-minimal surface $S$ in a pseudo-Riemannian manifold $(M, G)$ is given by the monotonicity of the second variation formula of the volume $V(S)$ under Hamiltonian deformations (see \cite{AnGer} and \cite{Oh1}). For a smooth compactly supported function  $u\in C^{\infty}_{c}(S)$ the second variation $\delta^2 V(S)(X)$ formula in the direction of the Hamiltonian vector field $X=J\nabla u$ is:
\begin{equation}\label{e:hamstb}
\delta^2 V(S)(X)=\int_{S} -(\Delta u)^2 + {\mbox Ric}^G(\nabla u,\nabla u)+2G(h(\nabla u,\nabla u),nH)+G^2(nH,J\nabla u),
\end{equation}
where $h$ is the second fundamental form of $S$, ${\mbox Ric}^G$ is the Ricci curvature tensor of the metric $G$, and $\Delta$ with $\nabla$ denote the Laplacian and gradient, respectively, with respect to the metric $G$ induced on $S$. For the Hamiltonian stability of projected rank one Hamiltonian $G^{\epsilon}$-minimal surfaces we prove the following theorem:

\vspace{0.1in}

{\indent\sc Proof of Theorem \ref{t:hstability}:}
Let $\Phi=(\phi,\psi):S\rightarrow \Sigma_1\times\Sigma_2$ be of projected rank one Hamiltonian $G^{\epsilon}$-minimal immersion and let $(s,t)$ be the corresponded arclengths of $\phi$ and $\psi$, respectively. 

After a brief computation, the second variation formula for the volume functional with respect to the Hamiltonian vector field $X=J\nabla u$ given by (\ref{e:hamstb}), becomes
\[
\delta^2 V(S)(X)=\int_{S} -(\epsilon_{\phi}u_{ss}+\epsilon\epsilon_{\psi} u_{tt})^2+\epsilon_{\phi}u_s^2\kappa_1+\epsilon_{\psi}u_t^2\kappa_2-(\epsilon_{\phi}u_s k_{\phi}-\epsilon\epsilon_{\psi} u_t k_{\psi})^2,
\]
where $\kappa_1,\kappa_2$ are the Gauss curvatures of $g_1$ and $g_2$, respetively and the theorem follows.
$\Box$

We also have the next Proposition:

\begin{Cor}
Let $(\Sigma,g)$ be a Riemannian two manifold of positive (negative) Gaussian curvature. Assume that every $G^-$-minimal Lagrangian surface $\Phi=(\phi,\psi)$ immersed in $\Sigma\times\Sigma$ is a pair of timelike (spacelike) geodesics in $\Sigma$. Then  $\Phi$ is $H$-stable.
\end{Cor}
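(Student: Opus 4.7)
The plan is to realize this statement as a direct specialization of Theorem \ref{t:hstability} to the case $\Sigma_1=\Sigma_2=\Sigma$ and $\epsilon=-1$. First I would observe that a $G^{-}$-minimal Lagrangian surface is automatically Hamiltonian $G^{-}$-minimal, because $H\equiv 0$ trivially forces ${\mbox{div}}(JH)=0$. Next, since $(\Sigma,g)$ is non-flat (its Gauss curvature is strictly positive or strictly negative throughout), Corollary \ref{c:coriiwu} identifies any such $\Phi=(\phi,\psi)$ as being of projected rank one, with $\phi$ and $\psi$ geodesics of $(\Sigma,g)$. This places $\Phi$ inside the class to which Theorem \ref{t:hstability} applies.

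Next I would verify the sign conditions $\epsilon_{\phi}\kappa_{1}\leq 0$ and $\epsilon_{\psi}\kappa_{2}\leq 0$ of that theorem. Here $\kappa_{1}=\kappa_{2}=\kappa$, the common Gaussian curvature of $\Sigma$, so the conditions reduce to $\epsilon_{\phi}\kappa\leq 0$ and $\epsilon_{\psi}\kappa\leq 0$. In the positive-curvature case the hypothesis forces $\phi$ and $\psi$ to be timelike, i.e.\ $\epsilon_{\phi}=\epsilon_{\psi}=-1$, so $\epsilon_{\phi}\kappa=\epsilon_{\psi}\kappa=-\kappa<0$. In the negative-curvature case both geodesics are spacelike, so $\epsilon_{\phi}=\epsilon_{\psi}=+1$ and $\epsilon_{\phi}\kappa=\epsilon_{\psi}\kappa=\kappa<0$. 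In either situation, Theorem \ref{t:hstability} is directly applicable and delivers that $\Phi$ is a local maximizer of the volume in its Hamiltonian isotopy class, i.e., $H$-stable.

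There is essentially no analytic obstacle, since the second-variation computation has already been carried out in the proof of Theorem \ref{t:hstability}; the only point that requires any care is the sign bookkeeping of $\epsilon_{\phi},\epsilon_{\psi}$ against $\kappa$ in the two cases, which I would tabulate explicitly as above. Thus the corollary follows.
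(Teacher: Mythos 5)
Your proposal is correct and follows exactly the route the paper intends: the corollary is stated immediately after Theorem \ref{t:hstability} precisely as the specialization $\Sigma_1=\Sigma_2=\Sigma$, $\epsilon=-1$, with minimality trivially implying Hamiltonian minimality and the hypothesized causal character of the geodesics giving $\epsilon_{\phi}\kappa\leq 0$ and $\epsilon_{\psi}\kappa\leq 0$ in both curvature cases, just as in your sign bookkeeping. Your implicit reading of ``Riemannian'' as Lorentzian (forced by the mention of timelike and spacelike geodesics) is the right interpretation of what is evidently a typo in the statement.
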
 

\begin{example} We know from \cite{An4} that every $G^-$-minimal Lagrangian immersion in ${\mbox Ad}{\mathbb S}^3$ is the Gauss map of a equidistant tube along a geodesic $\gamma$ in ${\mbox Ad}{\mathbb S}^3$ and following the example \ref{e:gaussmaporgeodesiccongruence} it must be locally parametrised as the product of geodesics in ${\mbox d}{\mathbb S}^2$. In this example, we are going to see exactly how can we obtain this product of geodesics. Assume that $\gamma$ is a spacelike geodesic. Since the metric $G^-$ is invariant under the natural action of the isometry group of ${\mbox Ad}{\mathbb S}^3$, we may assume that $\gamma$ is parametrised by $\gamma(s)=(0,0,\cos s,\sin s)$. In this case the tube over $\gamma$ with constant distance $d>0$ is parametrised by,
\[
f:{\mathbb S}^1\times{\mathbb S}^1\rightarrow {\mbox Ad}{\mathbb S}^3:(s,t)\mapsto (\sinh d\cos t,\sinh d\sin t,\cosh d\cos s,\cosh d\sin s).
\]
The normal vector field is $N(s,t)=(\cosh d\cos t,\cosh d\sin t,\sinh d\cos s,\sinh d\sin s)$, and thus $(f,v_1:=f_s/|f_s|,v_2:=f_t/|f_t|,N)\in SO(2,2)$. It is also known by \cite{An4}, that $ L^-({\mbox Ad}{\mathbb S}^3)$ is identified with the Grassmannian ${\mbox Gr}^-(2,4):=\{x\wedge y\in\Lambda^2({\mathbb R}^3):\; y\in T_x{\mbox Ad}{\mathbb S}^3,\; \left<y,y\right>_2=-1\}$. For an oriented orthonormal frame $\{e_1,e_2,e_3,e_4\}$ of ${\mathbb R}^{2,2}:=({\mathbb R}^4,\left<.,.\right>_2)$ such that $-|e_1|=-|e_2|=|e_3|=|e_4|=1$ we define the subspaces ${\mathbb R}_{\pm}^{1,2}$ of $\Lambda^2({\mathbb R}^3)$ generated by the vectors
\[
E^1_{\pm}=(e_1\wedge e_2\pm e_3\wedge e_4)/\sqrt{2},\quad E^2_{\pm}=(e_1\wedge e_3\pm e_4\wedge e_2)/\sqrt{2},\quad E^3_{\pm}=(e_1\wedge e_4\pm e_2\wedge e_3)/\sqrt{2}.
\]
We define the de Sitter 2-spaces by ${\mbox d}{\mathbb S}_\pm^2:=\{x=x_1E^1_{\pm}+x_2E^2_{\pm}+x_3E^3_{\pm}\in {\mathbb R}_{\pm}^{1,2}|\; \left<x,x\right>_2=-1\}.$
If $u_1\wedge u_2\in {\mbox Gr}^-(2,4)$ take $u_3,u_4$ such that $(u_1,u_2,u_3,u_4)\in SO(2,2)$. The map 
\[
{\mbox Gr}^-(2,4)\rightarrow {\mbox d}{\mathbb S}_-^2\times {\mbox d}{\mathbb S}_+^2:
u_1\wedge u_2\mapsto ((u_1\wedge u_2+u_3\wedge u_4)/\sqrt{2},(u_1\wedge u_2-u_3\wedge u_4)/\sqrt{2}),
\]
is a diffeomorphism. The Gauss map $\bar f=f\wedge N\in L^-({\mbox Ad}{\mathbb S}^3)$ is identified to the following pair $(\phi,\psi)\in {\mbox d}{\mathbb S}^2_-\times {\mbox d}{\mathbb S}^2_+$, given by
\[
\phi(s,t)=(f\wedge N+v_1\wedge v_2)/\sqrt{2},\quad
\psi(s,t)=(f\wedge N-v_1\wedge v_2)/\sqrt{2}.
\]
By setting $u=t-s$ and $v=t+s$, we observe that $\phi$ and $\psi$ are the following geodesics of ${\mbox d}{\mathbb S}^2_{\pm}$:
\[
\phi(u)=-\cos u E_-^2+\sin u E^3_-,\quad \psi(v)=-\cos (v) E_+^2-\sin (v) E^3_+.
\]
Firthermore, $\phi$ and $\psi$ are timelike geodesics and therefore $\bar f$ is a (unstable) $H$-stable minimal Lagrangian torus in $L^{-}(\mathop{\mbox Ad}{\mathbb S}^3)$. For the case where $\gamma$ is a timelike geodesic, a similar argument shows that $\bar f$ is $H$-unstable. We emphasize here that  by using different methods  in \cite{AnGer}, it was first proven that the Gauss map $\bar f$ is $H$-stable. 
\end{example}

\end{document}